\newtheorem{theorem}{Theorem}
\newtheorem{observation}[theorem]{Observation}
\newtheorem{proposition}[theorem]{Proposition}
\newtheorem{corollary}[theorem]{Corollary}
\newtheorem{remark}[theorem]{Remark}
\tikzstyle{vertex}=[circle, draw, inner sep=0pt, minimum size=6pt]
\newcommand{\QEDmark}{\mbox{\textsc{qed}}}
\newcommand{\proofStarter}[1]{\textsc{#1}}
\begin{document}

\title{Covering Italian domination in graphs}
\date{}
\author {
Abdollah Khodkar$^a$, Doost Ali Mojdeh$^b$, Babak Samadi$^b$\thanks{Corresponding author} and Ismael G. Yero$^c$\vspace{1.5mm}\\
$^a$Department of Mathematics\\
University of West Georgia, Carrollton, GA 30118, USA\\
{\tt akhodkar@westga.edu}\vspace{1.5mm}\\
$^b$Department of Mathematics\\
University of Mazandaran, Babolsar, Iran\\
{\tt damojdeh@umz.ac.ir$^*$}\\
{\tt samadibabak62@gmail.com}\vspace{1.5mm}\\
$^c$Departamento de Matem\'{a}ticas\\
Universidad de C\'{a}diz, Algeciras, Spain\\
{\tt ismael.gonzalez@uca.es}\vspace{3mm}\\
}
\date{}
\maketitle

\begin{abstract}
For a graph $G=(V(G),E(G))$, an Italian dominating function (ID function) of $G$ is a function $f:V(G)\rightarrow \{0,1,2\}$ such that for each vertex $v\in V(G)$ with $f(v)=0$, $f(N(v))\geq2$, that is, either there is a vertex $u \in N(v)$ with $f (u) = 2$ or there are two vertices $x,y\in N(v)$ with $f(x)=f(y)=1$. A function $f:V(G)\rightarrow \{0,1,2\}$ is a covering Italian dominating function (CID function) of $G$ if $f$ is an ID function and $\{v\in V(G)\mid f(v)\neq0\}$ is a vertex cover set. The covering Italian domination number (CID number) $\gamma_{cI}(G)$ is the minimum weight taken over all CID functions of $G$.

In this paper, we study the CID number in graphs. We show that the problem of computing this parameter is NP-hard even when restricted to some well-known families of graphs, and find some bounds on this parameter. We characterize the family of graphs for which their CID numbers attain the upper bound twice their vertex cover number as well as all claw-free graphs whose CID numbers attain the lower bound half of their orders. We also give the characterizations of some families of graphs with small or large CID numbers.
\end{abstract}

\textbf{2010 Mathematical Subject Classification:} 05C69.

\textbf{Keywords}: Covering Italian domination number, vertex cover number, claw-free graphs.


\section{Introduction}

Throughout this paper, we consider $G$ as a finite simple graph with vertex set $V(G)$ and edge set $E(G)$. We use \cite{we} as a reference for terminology and notation which are not explicitly defined here. The {\em open neighborhood} of a vertex $v$ is denoted by $N(v)$, and its {\em closed neighborhood} is $N[v]=N(v)\cup \{v\}$. The {\em minimum} and {\em maximum degrees} of $G$ are denoted by $\delta(G)$ and $\Delta(G)$, respectively.  Given the subsets $A,B\subseteq V(G)$, by $[A,B]$ we mean the set of all edges with one end point in $A$ and the other in $B$. Also, $deg_{H}(v)=|N(v)\cap V(H)|$ by taking $H$ as a subgraph of $G$. With the same assumption, by $G-H$ we mean the graph obtained from $G$ by removing all the vertices in $V(H)$, and edges incident with such vertices. Finally, for a given set $S\subseteq V(G)$, by $G[S]$ we represent the subgraph induced by $S$ in $G$.

A subset $S\subseteq V(G)$ is said to be {\em independent} if there is no edge with both end points in $S$. The {\em independence number} $\alpha(G)$ is the maximum cardinality among all independent sets of $G$. A {\em vertex cover} of $G$ is a set $Q\subseteq V(G)$ that contains at least one endpoint of every edge. The {\em vertex cover number} $\beta(G)$ is the minimum cardinality among all vertex cover sets of $G$. For any parameter $p$ of $G$, by a $p(G)$-set we mean a set of cardinality $p(G)$.

For a function $f:V(G)\rightarrow\{0,1,2\}$, we let $V^{f}_{i}=\{v\in V(G)\mid f(v)=i\}$ for each $i=0,1,2$ (we simply write $V_{i}$ if there is no ambiguity with respect to the function $f$). We call $\omega(f)=f(V(G))=\sum_{v\in V(G)}f(v)$ as the {\em weight} of $f$. A \textit{Roman dominating function} (RD function) of a graph $G$ is a function $f:V\rightarrow \{0,1,2\}$ such that if $v\in V_0$ for some $v\in V(G)$, then there exists $w\in N(v)$ such that $w\in V_2$. The minimum weight taken over all RD functions for $G$ is called the \textit{Roman domination number} of $G$, denoted by $\gamma_{R}(G)$. This concept was formally defined by Cockayne et al. in \cite{cdhh}.

Chellali et al \cite{chhm} introduced an Italian dominating function (also known as Roman $\{2\}$-dominating function) $f$ as follows. An \textit{Italian dominating function} (ID function) is a function $f:V(G)\rightarrow \{0,1,2\}$ with the property that for every vertex $v\in V(G)$ with $f(v)=0$, it follows $f(N(v))\geq 2$. That is, either there is a vertex $u\in N(v)$ with $f(u)=2$ or at least two vertices $x,y\in N(v)$ with $f(x)=f(y)=1$. The \textit{Italian domination number} (ID number) $\gamma_{I}(G)$ is the minimum weight among all ID functions of $G$.

The concept of Roman domination was motivated by the article of Ian Stewart entitled ``Defend the Roman Empire!" (\cite{s}), published in {\it Scientific American}. The idea was that the values $1$ and $2$ represent the number of Roman legions stationed at a location $v$. A location $u\in N(v)$ is considered to be {\em unsecured} if no legion is stationed there ($f(u)=0$). The unsecured location $u$ can be secured by sending a legion to $u$ from an adjacent location $v$. But a legion cannot be sent from a location $v$ if doing so leaves that location unsecured (if $f(v)=1$). Thus, two legions must be stationed at a location ($f(v)=2$) before one of the legions can be sent to a neighboring location. In terms of the Roman Empire, the Italian dominating strategy requires that every location with no legion has a neighboring location with two legions, or at least two neighboring locations with one legion each.

The existence of two adjacent locations with no guards can jeopardize them. In fact, they would be considered more vulnerable. One improved situation for a location with no guards is to be surrounded by locations at which guards are stationed. This motivates us to consider an ID function $f$ for which the vertices assigned $0$ under $f$ are independent or, equivalently, the set of vertices assigned $1$ or $2$ is a vertex cover set in the graph modeling. This provides a more flexible and stronger level of defense. More formally, we have the following definition. A function $f:V(G)\rightarrow \{0,1,2\}$ is a \textit{covering Italian dominating function} (CID function) of $G$ if $f$ is an ID function and $V_{1}\cup V_{2}$ is a vertex cover set (or, equivalently, $V_0$ is an independent set). The \textit{covering Italian domination number} (CID number) $\gamma_{cI}(G)$ is the minimum weight taken over all CID functions of the graph $G$. We must remark that this concept was already introduced in \cite{fan} under the name of \textit{outer independent Italian domination}. However, we prefer to use the terminology of covering since it looks more natural for us and our purposes in this work, and one of the first reasons for this is the next observation which was precisely first given in  \cite{fan}.

\begin{observation}\emph{(\cite{fan})}\label{obser}
For any graph $G$ with minimum degree at least two, $\gamma_{cI}(G)=\beta(G)$.
\end{observation}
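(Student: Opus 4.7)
The plan is to prove the equality via two matching inequalities, one of which uses the minimum-degree hypothesis.

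For the lower bound $\gamma_{cI}(G)\geq \beta(G)$, I would take any CID function $f$ of $G$. By definition, $V_1\cup V_2$ is a vertex cover of $G$, so $|V_1|+|V_2|=|V_1\cup V_2|\geq \beta(G)$. Since the weight is $\omega(f)=|V_1|+2|V_2|\geq |V_1|+|V_2|$, this already yields $\omega(f)\geq \beta(G)$, and in particular $\gamma_{cI}(G)\geq \beta(G)$. Note this direction does not require any assumption on $\delta(G)$.

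For the upper bound $\gamma_{cI}(G)\leq \beta(G)$, I would start from a $\beta(G)$-set $Q$ and define $f:V(G)\rightarrow\{0,1,2\}$ by $f(v)=1$ for $v\in Q$ and $f(v)=0$ for $v\in V(G)\setminus Q$. Then $V_1\cup V_2=Q$ is a vertex cover, which forces $V_0=V(G)\setminus Q$ to be independent. Now I need to verify the ID condition: for every $v\in V_0$, its neighbors must all lie in $Q$ (otherwise $V_0$ would not be independent), and by the hypothesis $\delta(G)\geq 2$ it has at least two such neighbors, so $f(N(v))\geq 2$. Hence $f$ is a CID function of weight $|Q|=\beta(G)$, proving $\gamma_{cI}(G)\leq \beta(G)$.

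Combining the two inequalities yields $\gamma_{cI}(G)=\beta(G)$. The only subtle point, and the reason the minimum-degree assumption appears, is the verification of the ID condition in the upper bound: with $\delta(G)\leq 1$ a vertex $v\in V_0$ of degree one would see only a single label $1$, violating $f(N(v))\geq 2$. So the main (small) obstacle is simply noticing that the hypothesis $\delta(G)\geq 2$ is exactly what makes the natural ``assign $1$ to a minimum vertex cover'' construction a valid CID function.
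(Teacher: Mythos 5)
Your proof is correct. The paper states this observation without proof, attributing it to \cite{fan}, but your two-inequality argument---the support $V_1\cup V_2$ of any CID function is a vertex cover, giving $\gamma_{cI}(G)\geq\beta(G)$, and assigning $1$ to a minimum vertex cover works as a CID function precisely because $\delta(G)\geq 2$ supplies two labelled neighbors to every vertex of $V_0$---is exactly the intended standard argument, and its lower-bound half coincides with the reasoning the paper gives in Remark~\ref{vertex-cover}.
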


From the observation above one could think that probably the graphs $G$ with minimum degree at least two are the only ones for which $\gamma_{cI}(G)=\beta(G)$. However, this is not true, as we can see from the example given in Figure \ref{cover-cid}. In such a case, the bold vertices form a vertex cover set of the minimum cardinality, and the labels given to each vertex form a CID function of the minimum weight. Thus, the equality $\gamma_{cI}(G)=\beta(G)$ is satisfied for such a graph with minimum degree one.

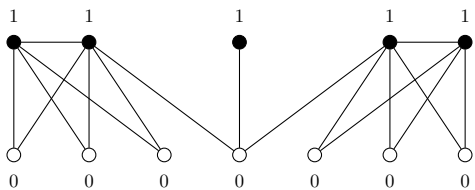
\begin{figure}[h]
\centering
\begin{tikzpicture}[scale=.5, transform shape]
\node [draw, shape=circle] (x1) at  (0,0) {};
\node [draw, shape=circle] (y1) at  (2,0) {};
\node [draw, shape=circle] (z1) at  (4,0) {};

\node [draw, shape=circle] (x2) at  (8,0) {};
\node [draw, shape=circle] (y2) at  (10,0) {};
\node [draw, shape=circle] (z2) at  (12,0) {};

\node [draw, shape=circle,fill=black] (c1) at  (2,3) {};
\node [draw, shape=circle,fill=black] (c2) at  (10,3) {};

\node [draw, shape=circle] (w1) at  (6,0) {};
\node [draw, shape=circle,fill=black] (w2) at  (6,3) {};

\node [draw, shape=circle,fill=black] (x11) at  (0,3) {};
\node [draw, shape=circle,fill=black] (z22) at  (12,3) {};

\node [scale=1.3] at (0,-0.7) {0};
\node [scale=1.3] at (2,-0.7) {0};
\node [scale=1.3] at (4,-0.7) {0};
\node [scale=1.3] at (6,-0.7) {0};
\node [scale=1.3] at (8,-0.7) {0};
\node [scale=1.3] at (10,-0.7) {0};
\node [scale=1.3] at (12,-0.7) {0};

\node [scale=1.3] at (0,3.7) {1};
\node [scale=1.3] at (2,3.7) {1};
\node [scale=1.3] at (6,3.7) {1};
\node [scale=1.3] at (10,3.7) {1};
\node [scale=1.3] at (12,3.7) {1};

\draw(x1)--(x11)--(y1);
\draw(x11)--(z1);
\draw(x2)--(z22)--(y2);
\draw(z22)--(z2);
\draw(x1)--(c1)--(y1);
\draw(x11)--(c1)--(z1);
\draw(c1)--(w1)--(c2)--(z22);
\draw(w2)--(w1);

\draw(y2)--(c2)--(x2);
\draw(c2)--(z2);

\end{tikzpicture}
\caption{A graph $G$ with minimum degree one and $\gamma_{cI}(G)=\beta(G)$.}\label{cover-cid}
\end{figure}

One must notice that despite the fact that this graph $G$ of Figure \ref{cover-cid} with minimum degree one satisfies $\gamma_{cI}(G)=\beta(G)$, does not mean that every vertex cover set of such a graph $G$ ``provides'' a CID function. If we exchange the bold vertex of degree one in the vertex cover set by its neighbor, then we get again a vertex cover set, but a similar labeling as the one shown in the figure does not produce a CID function. Is there then any connection between this fact and the reaching or not of the equality $\gamma_{cI}(G)=\beta(G)$?

This paper is organized as follows: We investigate the covering Italian domination in graphs. We show that the problem of computing the CID number is NP-hard even for some well-known family of graphs and give some comment on the approximation of this problem. In Section $3$, we give a characterization of all graphs $G$ for which $\gamma_{cI}(G)=2\beta(G)$ which is the limit case of a bound given in Section $2$. A sharp lower bound on the CID number of the $K_{1,r}$-free graphs is given in Section $4$. Moreover, we characterize the claw-free graphs attaining the lower bound. Finally, we give the characterizations of graphs $G$ with small or large values for $\gamma_{cI}(G)$.


\section{Complexity results}

We consider the problem of deciding whether a graph $G$ has its CID number at most a given integer. That is stated in the following decision problem.

$$\begin{tabular}{|l|}
  \hline
  \mbox{CID problem}\\
  \mbox{INSTANCE: A graph $G$ and an integer $k\leq|V(G)|$.}\\
  \mbox{QUESTION: Is $\gamma_{cI}(G)\leq k$?}\\
  \hline
\end{tabular}$$

Our aim is to show that the problem is NP-complete for cubic graphs and planar graphs with maximum degree at most four as well as triangle-free graphs. To this end, we make use of the well-known VERTEX COVER PROBLEM (VC problem) which is known to be NP-complete from \cite{gj}, and from \cite{garey} for the specific case of cubic graphs.

$$\begin{tabular}{|l|}
  \hline
  \mbox{VC problem}\\
  \mbox{INSTANCE: A graph $G$ and an integer $j\leq|V(G)|$.}\\
  \mbox{QUESTION: Is $\beta(G)\leq j$?}\\
  \hline
\end{tabular}$$

Moreover, the problem above remains NP-complete even when restricted to cubic graphs (see \cite{garey}), some planar graphs and triangle-free graphs (see \cite{gj}). Indeed, we have the following result.

\begin{theorem}\emph{(\cite{gj,garey})}
The VC problem is NP-complete even when restricted to cubic graphs, planar graphs of maximum degree at most three and triangle-free graphs.
\end{theorem}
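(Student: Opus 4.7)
The statement is a restatement of classical complexity results attributed to \cite{gj,garey}, so my plan is not to reprove the full reductions but to outline the route one would follow to verify each assertion.

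First I would check membership in NP, which is immediate: given a candidate set $Q\subseteq V(G)$ with $|Q|\le j$, one verifies in linear time that every edge of $G$ has at least one endpoint in $Q$, producing a polynomial-size certificate. This part is the same for every graph class in the statement and is not the obstacle.

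For NP-hardness on general graphs I would invoke Karp's standard reduction from 3-SAT: build a variable gadget (an edge between two literal vertices) for each variable, a triangle gadget for each clause, connect each clause-triangle vertex to its literal copy, and set the threshold to $j=n+2m$ where $n$ is the number of variables and $m$ the number of clauses; a cover of that size then corresponds to a satisfying assignment together with two vertices per clause-triangle. The cubic case follows by the degree-regularization reduction of \cite{garey}, where high-degree vertices are split through chains of replacement widgets that simultaneously bound the degree by three and preserve the cover number up to an additive function of the widgets used.

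For the triangle-free restriction I would replace each triangle gadget of the above reduction by an equivalent triangle-free gadget (for instance, by subdividing edges of the clause triangles or substituting small bipartite gadgets with matching vertex-cover behavior), then adjust the target threshold accordingly. For planar graphs of maximum degree at most three, one combines the cubic transformation with the crossover gadget of \cite{gj}, which replaces each edge-crossing in a non-planar drawing by a constant-size planar subgraph whose minimum vertex cover contribution is fixed; this removes all crossings while keeping every vertex of degree at most three and translating the threshold additively. The real technical obstacle in each case is not the idea but verifying that the gadget and threshold calibration is tight in both directions of the reduction; since this verification has already been carried out in \cite{gj,garey}, we simply cite those sources.
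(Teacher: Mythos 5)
Your proposal is consistent with the paper, which offers no proof of this theorem at all: it is stated purely as a citation of \cite{gj,garey}, exactly as you ultimately do. Your sketch of the underlying reductions (NP membership via the cover certificate, Karp's 3-SAT reduction with threshold $n+2m$, degree regularization for the cubic case, gadget substitution for the triangle-free case, and crossover gadgets for planarity) is a correct account of where the cited results come from, so there is nothing to reconcile.
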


\begin{theorem}\label{planar}
The CID problem is NP-complete even when restricted to cubic graphs, planar graphs with maximum degree at most four and triangle-free graphs.
\end{theorem}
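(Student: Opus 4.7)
Membership in NP is straightforward: given a pair $(G,k)$ and a candidate function $f:V(G)\to\{0,1,2\}$, one checks in polynomial time both that $f(N(v))\ge 2$ for every $v\in V_0$, that $V_0$ is independent, and that $\omega(f)\le k$. So the real content of the theorem lies in establishing NP-hardness on each of the three restricted classes, which the plan is to obtain by polynomial reductions from the VC problem on the corresponding restricted class, using Observation~\ref{obser} as the bridge. Crucially, Observation~\ref{obser} says $\gamma_{cI}(G)=\beta(G)$ whenever $\delta(G)\ge 2$, so once a VC-instance has been forced to have minimum degree at least two (while remaining in the prescribed class) the reduction is essentially free.

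For cubic graphs the reduction is immediate: any cubic graph trivially satisfies $\delta(G)=3\ge 2$, and therefore Observation~\ref{obser} yields $\gamma_{cI}(G)=\beta(G)$. Hence $(G,k)$ is a YES instance of CID if and only if it is a YES instance of VC on cubic graphs, which is NP-complete. For the remaining two classes, start from a VC-instance $G$ in that class (planar with $\Delta(G)\le 3$, or triangle-free), assume without loss of generality that $G$ has no isolated vertex (delete them; they contribute nothing to $\beta$), and augment every vertex $v$ of degree one by attaching to it a small gadget $H_v$ designed to (i) raise the minimum degree of the resulting graph $G'$ to at least two, (ii) keep $G'$ inside the class (planarity and $\Delta(G')\le 4$ in the first case, triangle-freeness in the second case), and (iii) contribute a fixed additive constant $c$ to the vertex cover number, so that $\beta(G')=\beta(G)+|L|\cdot c$, where $L$ is the set of leaves of $G$. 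A convenient choice in both cases is to attach a 4-cycle $v\,a\,b\,c\,v$ with two fresh private neighbours of $a$ (or another small triangle-free planar block) arranged so that the only way to extend a minimum vertex cover of $G$ into $G'$ is to pick exactly $c$ vertices of $H_v$ regardless of whether $v$ is itself in the cover. Once $G'$ is built, Observation~\ref{obser} gives $\gamma_{cI}(G')=\beta(G')$, so $\beta(G)\le k$ iff $\gamma_{cI}(G')\le k+|L|\cdot c$, completing the reduction.

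The main obstacle is therefore the gadget design for the second and third classes. One has to verify simultaneously four things for the chosen $H_v$: that $H_v$ together with its unique attachment edge keeps $\Delta\le 4$ (planar case) or introduces no triangle (triangle-free case); that planarity is preserved (embed $H_v$ inside a face incident to $v$); that $\beta(H_v\cup\{v\})$ equals a constant $c$ that is independent of whether $v$ is selected in the cover of $G$; and that this same constant $c$ is likewise the exact additional contribution to any minimum CID function of $G'$. The last point is automatic from Observation~\ref{obser} once $\delta(G')\ge 2$ is secured, so the real work is purely combinatorial: checking, by a small case analysis on $H_v$, that minimum covers of $G$ and $G'$ are related by a uniform additive shift. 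Once that lemma on the gadget is in hand, the reductions are polynomial in $|V(G)|$ and the theorem follows.
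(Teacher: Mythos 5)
Your NP-membership argument and the cubic case coincide with the paper's and are fine. The gap is in the gadget for the planar and triangle-free cases. You need $H_v$ to do two things simultaneously: raise the minimum degree of $G'$ to at least $2$ (so that Observation~\ref{obser} applies) and contribute a cost to the vertex cover that is a constant $c$ \emph{independent of whether $v$ is in the cover}. The concrete gadget you offer fails both. First, the ``two fresh private neighbours of $a$'' are degree-one vertices, so $G'$ again has minimum degree one and Observation~\ref{obser} cannot be invoked --- the very obstruction you were trying to remove. Second, even the bare $4$-cycle $v\,a\,b\,c\,v$ does not have the uniform-cost property: if $v$ is in the cover, the uncovered edges $ab,bc$ are handled by $b$ alone (one extra vertex), whereas if $v$ is not in the cover, both $a$ and $c$ are forced (two extra vertices). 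Hence $\beta(G')$ is not $\beta(G)+|L|\cdot c$ for any single constant $c$; it depends on whether some minimum cover of $G$ contains all the leaves, which you cannot control. You explicitly defer exactly this verification to an unproved ``small case analysis,'' and the one candidate you propose does not pass it, so as written the reduction is not established. The plan is salvageable: for instance, joining $v$ to two \emph{opposite} vertices of a fresh $4$-cycle $a\,b\,c\,d\,a$ gives cost exactly $2$ whether or not $v$ is chosen, keeps the graph triangle-free and planar, and makes every new degree at least $2$; but that lemma must be stated and checked.

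For comparison, the paper sidesteps gadget calibration entirely. It forms the corona $G'=G\odot K_1$ (a pendant leaf on \emph{every} vertex), accepts that $\delta(G')=1$ so Observation~\ref{obser} does not apply, and instead proves directly that $\gamma_{cI}(G')=n+\beta(G)$: every minimum CID function gives each pair $\{v_i,u_i\}$ total weight $1$ or $2$, and the weight-$1$ pairs correspond to an independent set of $G$, so the minimum weight is $2n-\alpha(G)$. This is shorter and needs no claim about how minimum vertex covers extend. Note also that your (repaired) construction would establish hardness for planar graphs of maximum degree three, slightly stronger than the stated bound of four, since only the degree-one vertices receive new edges.
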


\begin{proof}
The problem clearly belongs to NP since checking that a given function is indeed a CID function with weight at most $k$ can be done in polynomial time. We first consider the case when $G$ is a cubic graph. In this situation, by Observation \ref{obser}, we know that $\gamma_{cI}(G)=\beta(G)$. Thus, a reduction from the VC problem to the CID problem can be easily noted. This means that the CID problem is NP-complete for cubic graphs.

On the other hand, let $G$ be a planar graph with $V(G)=\{v_{1},\dots,v_{n}\}$ and maximum degree $\Delta(G)\leq 3$ (a triangle-free graph). Let $G'=G\odot K_1$ (the corona of $G$), which is obtained from $G$ by adding $n$ new vertices $u_{1},\dots,u_{n}$ and joining $u_{i}$ to $v_{i}$ for all $1\leq i\leq n$. Clearly, $G'$ is a planar graph with $|V(G')|=2n$ and $\Delta(G')\leq 4$ (a triangle-free graph as well).

Let $f$ be a $\gamma_{cI}(G')$-function. Clearly, $1\leq f(v_{i})+f(u_{i})\leq2$ for each $1\leq i\leq n$. If $f(v_{i})+f(u_{i})=2$ for some $1\leq i\leq n$, then we may without loss of generality assume that $f(v_{i})=2$ and $f(u_{i})=0$. Moreover, $f(u_{i})=1$ whenever $f(v_{i})+f(u_{i})=1$. Let $X=\{1\leq i\leq n\mid f(v_{i})+f(u_{i})=1\}$. Hence,
\begin{equation}\label{Tak}
\gamma_{cI}(G')=\sum_{i\notin X}(f(v_{i})+f(u_{i}))+\sum_{i\in X}(f(v_{i})+f(u_{i}))=2n-|X|.
\end{equation}

On the other hand, $|X|\leq \alpha(G)$ as the vertices $v_{i}$, for which $i\in X$, are assigned $0$ under $f$. So, $\gamma_{cI}(G')\geq 2n-\alpha(G)=n+\beta(G)$ by (\ref{Tak}) and the well-known Gallai theorem (\cite{we}) which states that $\alpha(H)+\beta(H)=n$, for any graph $H$ of order $n$.

Let $I$ be an $\alpha(G)$-set. We can observe that the assignment $0$ to the vertices in $I$ and $1$ to the other vertices of $G'$ defines a CID function of $G'$ with weight $n+\beta(G)$. Therefore, $\gamma_{cI}(G')\leq n+\beta(G)$ and so, $\gamma_{cI}(G')= n+\beta(G)$.

Let $j=n+k$. Then $\gamma_{cI}(G')\leq j$ if and only if $\beta(G)\leq k$, which completes the reduction. Since the VC problem is NP-complete for planar graphs of maximum degree at most three and triangle-free graphs, we deduce that the CID problem is also NP-complete for planar graphs of maximum degree at most four as well as triangle-free graphs.
\end{proof}

As a consequence of Theorem \ref{planar}, we conclude that the problem of computing the CID number is NP-hard, even when restricted to cubic graphs, planar graphs with maximum degree at most four and triangle-free graphs. In consequence, it would be desirable to bound the CID number in terms of several different invariants of the graph.

From Observation \ref{obser}, and by the fact that computing the vertex cover number of bipartite graphs can be polynomially done (by using for instance the well-known K\"{o}nig's Theorem - see also \cite{gj}), we can deduce that computing the CID number of bipartite graphs of minimum degree at least two can be polynomially done also.

In addition, we can also derive some approximation comments concerning computing the CID number of graphs. To do so, we need the following remark.

\begin{remark}\label{vertex-cover}
For any connected graph $G$ of order at least two, $\beta(G)\le \gamma_{cI}(G)\leq 2\beta(G)$.
\end{remark}

\begin{proof}
Let $S$ be an independent set in $G$ of cardinality $\alpha(G)$. Let $f:V(G)\rightarrow \{0,1,2\}$  be a function with $f(v)=2$ for $v\in V(G)\setminus S$, and $f(x)=0$ for the other vertices $x$. It is easy to see that $f$ is a CID function of $G$ with weight $2(n-\alpha(G))$. Since $\alpha(G)+\beta(G)=n$, the upper bound follows. The lower bound clearly follows since the set of vertices labeled with a positive number is a vertex cover of $G$.
\end{proof}

It is well-known that (see for instance \cite{erdos}) the vertex cover problem can be approximated within a factor of two by using a  simple greedy algorithm. This, together with Remark \ref{vertex-cover}, allow to claim that computing the CID number of graphs can also be approximated within a constant factor.


\section{Graphs $G$ with $\gamma_{cI}(G)=2\beta(G)$}

In what follows, we dedicate some particular attention to the upper bound given in Remark \ref{vertex-cover}. To this end, we need the following. Let $G_{1},\dots,G_{t}$ be the components of a graph $G$. Since $\gamma_{cI}(G)=\sum_{i=1}^{t}\gamma_{cI}(G_{i})$, it follows that $\gamma_{cI}(G)=2\beta(G)$ if and only if $\gamma_{cI}(G_{i})=2\beta(G_{i})$ for each $1\leq i\leq t$. So, in order to characterize all graphs for which their CID numbers equal twice its vertex cover number, it suffices to do it when $G$ is connected, and thus, our results stand by this assumption.

We introduce an infinite family of graphs as follows so as to characterize the graphs attaining the mentioned bound. Let $\mathcal{G}$ be a family of graphs $G$ constructed from a graph $H$ by joining every vertex of $H$ to at least two vertices in $G-H$ such that at least one of them is a leaf. Moreover, we let any $k$ independent vertices in $H$, among those vertices having only one leaf in $G-H$, have at least $k$ non-leaf independent neighbors in $G-H$ (see Figure \ref{fig11}). Note that $V(G)\setminus V(H)$ is an independent set.\vspace{1mm}

\begin{figure}[h]\vspace{-18mm}
\tikzstyle{every node}=[circle, draw, fill=white!, inner sep=0pt,minimum width=.16cm]
\begin{center}
\begin{tikzpicture}[thick,scale=.6]
  \draw(0,0) { 

+(-10.5,-5) node{}
+(-8,-5) node{}

+(-5.5,-4.5) node{}
+(-5.5,-3) node{}
+(-5.5,-5.75) --+(-5.5,-4.5) --+(-5.5,-3)
+(-5.5,-4.5) --+(-1,-6.75)

+(-3,-5) node{}
+(1,-5) node{}
+(-1,-4) node{}

+(-11.2,-3.5) node{}
+(-10.5,-3.5) node{}
+(-9.8,-3.5) node{}

+(-8,-3.5) node{}

+(-3,-3.5) node{}
+(1,-3.5) node{}
+(-1.5,-3) node{}
+(-0.5,-3) node{}

+(-5.5,-5.75) node{}
+(-5.5,-6.75) node{}

+(-1,-6.75) node{}

+(-10.5,-5) -- +(-8,-5) --+(-8,-3.5)
+(-3,-5) -- +(1,-5) -- +(-1,-4) -- +(-3,-5)
+(-10.5,-5) --+(-11.2,-3.5)
+(-10.5,-5) --+(-10.5,-3.5)
+(-10.5,-5) --+(-9.8,-3.5)
+(-3,-5) --+(-3,-3.5)
+(1,-5) --+(1,-3.5)
+(-0.5,-3) --+(-1,-4) --+(-1.5,-3)
+(-8,-5) --+(-5.5,-5.75) --+(-3,-5)
+(-8,-5) --+(-5.5,-6.75) --+(1,-5)
+(-3,-5) --+(-1,-6.75) --+(1,-5)

+(-10.5,-5.5) node[rectangle, draw=white!0, fill=white!100]{$v_{1}$}
+(-8.1,-5.5) node[rectangle, draw=white!0, fill=white!100]{$v_{2}$}
+(-6,-4.5) node[rectangle, draw=white!0, fill=white!100]{$v_{3}$}
+(-3.4,-4.6) node[rectangle, draw=white!0, fill=white!100]{$v_{4}$}
+(1.1,-5.5) node[rectangle, draw=white!0, fill=white!100]{$v_{5}$}
+(-0.4,-3.9) node[rectangle, draw=white!0, fill=white!100]{$v_{6}$}

};
\end{tikzpicture}
\end{center}\vspace{-5mm}
\caption{A representative member of $\mathcal{G}$ in which $H$ is the subgraph induced by $\{v_{1},\cdots,v_{6}\}$.}\label{fig11}
\end{figure}
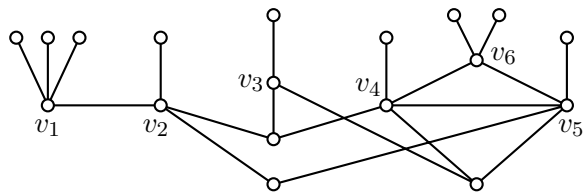

We might remark that the above construction implies that the set $V(G)\setminus V(H)$ produces an independent set in $G$ of cardinality $\alpha(G)$. From now on we are then in a position to present the main theorem of this section.

\begin{theorem}\label{pro8}
For a connected graph $G$ of order at least three, $\gamma_{cI}(G)=2\beta(G)$ holds if and only if $G\in \mathcal{G}$.
\end{theorem}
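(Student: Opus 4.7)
The plan is to prove the two implications separately, the forward direction ($G\in\mathcal{G}\Rightarrow\gamma_{cI}(G)=2\beta(G)$) being the delicate one. For the forward direction, write $G\in\mathcal{G}$ as built from a subgraph $H$, and for $v\in V(H)$ let $L(v)$ and $N^*(v)$ denote the sets of its leaf and non-leaf neighbors in $V(G)\setminus V(H)$. I will first verify that $\beta(G)=|V(H)|$. Since $V(G)\setminus V(H)$ is independent, $\beta(G)\leq|V(H)|$. For the reverse, given any independent set $I=I_H\cup I_*$ with $I_H\subseteq V(H)$, the neighbors of $I_H$ in $V(G)\setminus V(H)$ number $\sum_{v\in I_H}|L(v)|+\bigl|\bigcup_{v\in I_H}N^*(v)\bigr|$. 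If some $v\in I_H$ has $|L(v)|\geq 2$, or if $I_H$ has any non-leaf external neighbor, this already exceeds $|I_H|$; otherwise the defining Hall-type axiom of $\mathcal{G}$, applied to the independent set $I_H$ of one-leaf vertices with no non-leaf external neighbors, forces $|I_H|=0$. Either way $|I|\leq|V(G)\setminus V(H)|$, so $\beta(G)=|V(H)|$.

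The heart of the forward direction is a per-vertex weight bound. For an arbitrary CID function $f$ and $v\in V(H)$, define $w(v)=f(v)+\sum_{\ell\in L(v)}f(\ell)$. The key observation is that a leaf $\ell$ with $f(\ell)=0$ forces $f(v)=2$ by applying the ID condition at $\ell$, so a short case check yields $w(v)\geq 2$ unless $f(v)=0$ and $|L(v)|=1$. Let $A$ be the set of these exceptional vertices; then $A\subseteq V_0$ is independent, and by the axiom, $\bigl|\bigcup_{v\in A}N^*(v)\bigr|\geq|A|$. Each $y\in\bigcup_{v\in A}N^*(v)$ has $f(y)\geq 1$, because $y$ is adjacent to some $v\in V_0$ and $V_0$ is independent. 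Since these vertices lie outside $V(H)$ and outside the leaves, their weight contribution is disjoint from $\sum_{v\in V(H)}w(v)$, so
\[
\omega(f)\;\geq\;2\bigl(|V(H)|-|A|\bigr)+|A|+|A|\;=\;2|V(H)|\;=\;2\beta(G),
\]
and combining with Remark \ref{vertex-cover} yields equality.

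For the converse, fix any minimum vertex cover $C$, let $H=G[C]$, and verify the three defining properties of $\mathcal{G}$ by contradiction, exhibiting in each case a CID function of weight at most $2\beta(G)-1$. First, if some $v\in C$ has no leaf neighbor outside $C$, the labeling $f(v)=1$, $f\equiv 2$ on $C\setminus\{v\}$, $f\equiv 0$ elsewhere, is CID. Second, if some $v\in C$ has exactly one external neighbor $\ell$ (which must be a leaf, by the first property), swapping $v$ out of the cover via $f(v)=0$, $f(\ell)=1$, $f\equiv 2$ on $C\setminus\{v\}$, $f\equiv 0$ elsewhere, also works, using $|V(G)|\geq 3$ and connectedness to ensure $v$ has an $H$-neighbor. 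Third, if independent $v_1,\dots,v_k\in V(H)$ each with a unique leaf $\ell_i$ violate the axiom with $k'=\bigl|\bigcup_iN^*(v_i)\bigr|<k$, set $f(v_i)=0$, $f(\ell_i)=1$, $f\equiv 1$ on $\bigcup_iN^*(v_i)$, $f\equiv 2$ on $C\setminus\{v_1,\dots,v_k\}$, and $0$ elsewhere; the pairwise non-adjacency of the $v_i$ is what keeps $V_1\cup V_2$ a vertex cover, and the resulting weight is $2|C|-k+k'<2\beta(G)$.

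The principal obstacle is in the forward direction: the naive per-vertex bound $w(v)\geq 2$ genuinely fails on $A$, and the entire role of the Hall-type axiom is to restore the missing $|A|$ units of weight through $N^*(A)$, which is disjoint from the previously counted vertices and hence safely additive. In the converse, the bookkeeping is mostly routine, except for the need to check in each construction that the proposed $f$ is simultaneously an ID function, makes $V_1\cup V_2$ a vertex cover, and keeps $V_0$ independent; the independence of $\{v_1,\dots,v_k\}$ in the third construction is essential for this.
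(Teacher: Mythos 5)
Your proof is correct, and while the converse follows essentially the same three-step contradiction scheme as the paper (no external leaf; exactly one external neighbor; failure of the Hall-type condition), your forward direction takes a genuinely different route. The paper proves $\gamma_{cI}(G)\geq 2\beta(G)$ by an extremal exchange argument: it picks a $\gamma_{cI}(G)$-function $g$ maximizing $|V_2^g\cap V(H)|$, rules out the value $1$ on $V(H)$ and on multi-leaf supports by local swaps, and only then compares $\omega(g)$ with the all-$2$'s function via the set $W$ of non-leaf external neighbors. You instead give a direct charging argument: the per-vertex weight $w(v)=f(v)+\sum_{\ell\in L(v)}f(\ell)$ over the pairwise disjoint cells $\{v\}\cup L(v)$ satisfies $w(v)\geq 2$ except on an independent exceptional set $A$ of one-leaf vertices where $w(v)=1$, and the Hall-type axiom recovers the deficit through $|N^*(A)|\geq|A|$ many positively weighted, previously uncounted vertices. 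This works for an \emph{arbitrary} CID function, so it avoids the extremal choice and the normalization case analysis entirely; it also makes transparent exactly where the Hall condition enters (restoring the $|A|$ missing units), which is somewhat obscured in the paper's computation. Two minor remarks: your verification that $\beta(G)=|V(H)|$ is more complicated than necessary, since $\sum_{v\in I_H}|L(v)|\geq|I_H|$ already gives the needed (non-strict) bound on the external neighborhood without invoking the Hall axiom; and in defining $A$ you should note explicitly that a vertex with $f(v)=0$, $|L(v)|=1$ and $f(\ell_v)=2$ is not exceptional, so that $w(v)=1$ exactly on $A$ --- but this is how you in fact use it, so nothing breaks.
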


\begin{proof}
Let $G\in \mathcal{G}$. Note that, by construction, $S=V(G)\setminus V(H)$ is an $\alpha(G)$-set. Therefore, $V(H)$ is a $\beta(G)$-set. If there are at least two leaves at every vertex of $H$, then $f(v)=2$ for $v\in V(H)$, and $f(x)=0$ for the other vertices $x$ defines a $\gamma_{cI}(G)$-function with weight $2\beta(G)$.

Now suppose that some vertices $v\in V(H)$ have only one leaf in $S$.
Let $\gamma_{cI}(G)<2\beta(G)$ and let $g$ be a $\gamma_{cI}(G)$-function with the property that $|V_{2}^{g}\cap V(H)|$ is maximized. Since $\omega(g)<2\beta(G)$, it follows that there exists a vertex $v\in V(H)$ with wight $0$ or $1$ under $g$. Let $g(v)=1$ and let $u\in V(G)\setminus V(H)$ be a leaf adjacent to $v$. This implies that $g(u)=1$, necessarily. Now $(g'(u),g'(v))=(0,2)$ and $g'(x)=g(x)$ for the other vertices defines a $\gamma_{cI}(G)$-function for which $|V_{2}^{g'}\cap V(H)|>|V_{2}^{g}\cap V(H)|$ which contradicts our choice of $g$. Therefore, $g(v)=0$. Since $V_{0}^{g}$ is independent, $g(u)\geq1$. If $g(u)=2$, then the above-mentioned function $g'$ is again a $\gamma_{cI}(G)$-function, a contradiction. Therefore, $g(u)=1$. In conclusion, the above argument implies that:\vspace{1.5mm}\\
$\bullet$ all vertices in $V(H)$ are assigned $0$ or $2$ under $g$,\vspace{1.5mm}\\
$\bullet$ for any vertex $x\in V(H)$ having more than one leaf in $V(G)\setminus V(H)$, we have $g(x)=2$ and $g(y)=0$ for all leaves $y$ adjacent to $x$, and\vspace{1.5mm}\\
$\bullet$ for any vertex $x\in V(H)$ with precisely one leaf $y$ in $V(G)\setminus V(H)$, we have $g(x)=0$ (resp. $g(x)=2$), and $g(y)=1$ (resp. $g(y)=0$).\vspace{1.5mm}

Since $deg_{G-H}(v)\geq2$, there exists a vertex $w\neq u$ of $G-H$ adjacent to $v$. Moreover, if such a vertex $w$ is a leaf, then $(g''(v),g''(u),g''(w))=(2,0,0)$ and $g''(x)=g(x)$ for the other vertices $x$ will be a $\gamma_{cI}(G)$-function, a contradiction to our choice of $g$. Therefore, $w$ is not a leaf. We now let $W$ be the set of all such vertices $w$. Notice that $N(W)\cap V_{0}^{g}$ is an independent set in $H$ in which every vertex is adjacent to precisely one leaf in $G-H$. Therefore, $|W|\geq|N(W)\cap V_{0}^{g}|$ by construction. Now define $h:V(G)\rightarrow\{0,1,2\}$ by $h(v)=2$ for all $v\in V(H)$, and $h(x)=0$ for the other vertices $x$. Clearly, $h$ is a CID function of $G$. Moreover,
$$2\beta(G)=\omega(h)=\omega(g)-|W|-|N(W)\cap V_{0}^{g}|+2|N(W)\cap V_{0}^{g}|\leq \omega(g)=\gamma_{cI}(G).$$
Thus, $\gamma_{cI}(G)=2\beta(G)$.

Conversely, let $\gamma_{cI}(G)=2\beta(G)$ for a graph $G$. Let $S$ be an $\alpha(G)$-set and let $H$ be the subgraph induced by $V(G)\setminus S$. If a vertex $v\in V(H)$ has no leaf in $V(G)\setminus V(H)$, then we efine $g(v)=1$ and $g(x)=f(x)$ for the other vertices $x$. Then $g$ is  a CID function with weight less than $2\beta(G)$, which is a contradiction. Thus at least one leaf in $S$ is adjacent to each vertex in $V(G)\setminus S$. If each vertex of $H$ has at least two leaves in $G-H$, then $G\in \mathcal{G}$, as described above. So, suppose that some vertices $v\in V(H)$ have only one leaf in $S$. Now if $G\not\in\mathcal{G}$, we consider two cases.\vspace{1mm}

\textit{Case $1$.}
$deg_{G\setminus H}(x)=1$, for some $x\in V(H)$. So, $x$ is only adjacent to a leaf $y$ in $S$. If $x$ is an isolated vertex of $H$, then the component $G[x,y]$ contradicts the connectedness of $G$. Therefore, $x$ has at least one neighbor in $H$. Then by assigning $2$ to the vertices in $V(H)\setminus\{x\}$, $1$ to the unique leaf adjacent to $x$ in $G\setminus H$ and $0$ to the other vertices we obtain a CID function with weight $2\beta(G)-1$, which is impossible.\vspace{1mm}

\textit{Case $2$.}
There exist $k$ independent vertices $v_1,v_2,\ldots,v_k$ in $H$, among those ones which have precisely one leaf in $G\setminus H$, with at most $k'<k$ non-leaf independent neighbors $u_1,u_2,\ldots,u_{k'}$ in $G-H$. By assigning $1$ to all vertices $u_i$ and the leaves in $G-H$ adjacent to the vertives $v_j$, $2$ to the vertices in $V(H)\setminus\{v_1,v_2,\ldots,v_k\}$ and $0$ to the other vertices, we have a CID function with weight at most $2\beta(G)-1$. This is also a contradiction. Hence, $G\in \mathcal{G}$.\vspace{1mm}

Consequently, we end up with $G\in \mathcal{G}$, and this completes the proof.
\end{proof}


\section{$K_{1,r}$-free graphs}

In this section we center our attention on presenting some bounds for those graphs containing no stars. A remarkable (and well-known) class of such graphs are the claw-free graphs, to whom we dedicate special attention. We say that a support vertex is \textit{strong} if it is adjacent to more than one leaf.

\begin{theorem}\label{free}
Let $G$ be a $K_{1,r}$-free graph of order $n$ with $s'$ strong support vertices. Then,
$$\gamma_{cI}(G)\geq \frac{2(n+s')}{1+r}$$
and this bound is sharp.
\end{theorem}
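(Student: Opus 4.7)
The plan is to combine an exchange argument with a double-counting on edges between $V_0$ and $V_1\cup V_2$. I would first normalize a $\gamma_{cI}(G)$-function $f$ so that every strong support vertex is assigned value $2$. Fix a strong support vertex $v$ with leaves $\ell_1,\ldots,\ell_k$ where $k\geq 2$. If $f(v)=1$, then each leaf must satisfy $f(\ell_i)\geq 1$ (otherwise its sole neighbor $v$ would need weight $\geq 2$), and reassigning $(f(v),f(\ell_1))\to(2,0)$ does not increase the weight while preserving both the independence of $V_0$ and the ID condition (the only vertex that changes status is $\ell_1$, whose unique neighbor is $v\notin V_0$). If $f(v)=0$, then $v\in V_0$ forces $f(\ell_i)\geq 1$ for all $i$ by independence of $V_0$, and reassigning $(f(v),f(\ell_1),f(\ell_2))\to(2,0,0)$ preserves the weight or lowers it, and again keeps the CID conditions intact (each $\ell_i$ is now dominated by $v$, and no other vertex of $V_0$ is adjacent to $v,\ell_1,\ell_2$). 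Hence we may assume $f(v)=2$ for every strong support vertex, so that $n_2:=|V_2|\geq s'$.

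Next, set $n_i=|V_i|$ and let $e_j$ denote the number of edges between $V_j$ and $V_0$ for $j=1,2$. Since $V_0$ is independent and $G$ is $K_{1,r}$-free, every vertex of $V_1\cup V_2$ can have at most $r-1$ neighbors in $V_0$: indeed, $r$ such neighbors would be pairwise non-adjacent and form an induced $K_{1,r}$ with the given vertex. This yields
$$e_1\leq (r-1)n_1, \qquad e_2\leq (r-1)n_2.$$
On the other hand, the ID condition at each $u\in V_0$ gives $|N(u)\cap V_1|+2|N(u)\cap V_2|\geq 2$, and summing over $V_0$ produces $e_1+2e_2\geq 2n_0$.

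Combining the two directions yields
$$(r-1)\,\omega(f)=(r-1)(n_1+2n_2)\geq e_1+2e_2\geq 2n_0=2(n-n_1-n_2).$$
Using $n_2\geq s'$ we get $n_1+n_2=\omega(f)-n_2\leq \omega(f)-s'$, and substituting gives $(r-1)\omega(f)\geq 2(n-\omega(f)+s')$, equivalently $(r+1)\omega(f)\geq 2(n+s')$, which is the claimed inequality. For sharpness I would exhibit the star $K_{1,r-1}$ with $r\geq 3$: it is $K_{1,r}$-free, has $n=r$ and $s'=1$, and $\gamma_{cI}(K_{1,r-1})=2=2(r+1)/(r+1)$.

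The main obstacle is the exchange step, where one has to verify carefully that both the independence of $V_0$ and the ID condition survive the reassignment at a strong support vertex (especially in the case $f(v)=0$, where two leaves simultaneously migrate into $V_0$). Once this normalization is established, the crucial inequality $n_2\geq s'$ supplies precisely the extra $2s'$ on the right-hand side of the final bound; the rest is routine counting.
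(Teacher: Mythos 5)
Your proof is correct and follows essentially the same strategy as the paper's: normalize a minimum CID function so that strong support vertices receive $2$ (giving $|V_2|\geq s'$), bound the edges from $V_0$ into $V_1\cup V_2$ by $(r-1)$ per endpoint using $K_{1,r}$-freeness and the independence of $V_0$, and compare with the lower bound $e_1+2e_2\geq 2|V_0|$ coming from the Italian condition; your sharpness example $K_{1,r-1}$ is also the one used in the paper. The only differences are cosmetic --- the paper splits $V_0$ into $A=V_0\cap N(V_2)$ and its complement rather than summing the domination condition over all of $V_0$, and it merely asserts the normalization that you verify in detail via the exchange argument.
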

\begin{proof}
Let $f$ be a $\gamma_{cI}(G)$-function. We define $A$ as $V_0\cap N(V_{2})$. Since $G$ is $K_{1,r}$-free and $V_0$ is independent, every vertex in $V_{2}$ is adjacent to at most $r-1$ vertices in $A$. This shows that $|A|\leq (r-1)|V_{2}|$. On the other hand, every vertex in $V_0\setminus A$ has at least two neighbors in $V_{1}$. Therefore, $2|V_0\setminus A|\leq|[V_0\setminus A,V_{1}]|\leq(r-1)|V_{1}|$. Moreover, we may assume that all the strong support vertices belong to $V_{2}$. This shows that $|V_{2}|\geq s'$. We now have,
\begin{equation}\label{INE1}
\begin{array}{lcl}
2(n-\gamma_{cI}(G)+s')&\leq&2(n-|V_{1}|-|V_{2}|)\\
&=&2|V_0|=2|A|+2|V_0\setminus A|\leq(r-1)(|V_{1}|+2|V_{2}|)=(r-1)\gamma_{cI}(G).
\end{array}
\end{equation}
So, we deduce $\gamma_{cI}(G)\geq \frac{2(n+s')}{r+1}$.

To see the bound is sharp for $r\geq3$, we consider $G=K_{1,r-1}$ with $\gamma_{cI}(G)=2$, $n=r$ and $s'=1$. Let $r=3$ (in such a case $G$ is claw-free). We begin with a $p$-cycle $v_{1}v_{2}\cdots v_{p}v_{1}$. Add $p$ new vertices $u_{1},\dots,u_{p}$ and join $u_{i}$ to both $v_{i}$ and $v_{i+1}$, for $1\leq i\leq p$, with $v_{p+1}=v_{1}$. It is easy to see that the function $f:V(G)\rightarrow \{0,1,2\}$ defined by $f(v_{i})=1$ for $1\leq i\leq p$, and $f(u_{i})=0$ for $1\leq i\leq p$ is a CID function of $G$ with weight $\gamma_{cI}(G)=n/2=2(n+0)/(1+3)$.
\end{proof}

As an immediate result of Theorem \ref{free} we have the following corollary for all graphs $G$.

\begin{corollary}\label{cor4}
Let $G$ be a graph of order $n$ with $s'$ strong support vertices. Then, $\gamma_{cI}(G)\geq \frac{2(n+s')}{\Delta+2}$ and this bound is sharp.
\end{corollary}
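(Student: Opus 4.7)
The plan is to derive Corollary \ref{cor4} directly from Theorem \ref{free} by a one-step reduction. The key observation is that for any graph $G$ with $\Delta(G)=\Delta$, no vertex has more than $\Delta$ neighbors, so $G$ cannot contain an induced $K_{1,\Delta+1}$; in other words, $G$ is $K_{1,\Delta+1}$-free. Applying Theorem \ref{free} with $r=\Delta+1$ then gives
\[
\gamma_{cI}(G) \;\geq\; \frac{2(n+s')}{1+(\Delta+1)} \;=\; \frac{2(n+s')}{\Delta+2},
\]
which is exactly the stated inequality. Nothing in the hypothesis of Theorem \ref{free} requires $r$ to be achieved as an induced star, only that $G$ avoid $K_{1,r}$, so this substitution is legitimate.

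For the sharpness assertion, I would exhibit the star $G=K_{1,\Delta}$ with $\Delta\geq 2$. Here $n=\Delta+1$, the center is a strong support vertex (adjacent to $\Delta\geq 2$ leaves), so $s'=1$. One verifies easily that $\gamma_{cI}(K_{1,\Delta})=2$: the center is incident with every edge, hence must lie in $V_1\cup V_2$, and since all leaves are pairwise non-adjacent and sit in $V_0$ under any optimal CID function of weight below $\Delta$, the center must receive value $2$; assigning $2$ to the center and $0$ to every leaf is then a valid CID function. Substituting gives $\frac{2(n+s')}{\Delta+2}=\frac{2(\Delta+2)}{\Delta+2}=2=\gamma_{cI}(G)$.

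There is essentially no obstacle here; the only step meriting a sentence of care is the justification that maximum degree $\Delta$ forces $K_{1,\Delta+1}$-freeness, and the choice of $K_{1,\Delta}$ as a sharpness example (rather than the cycle-with-triangles construction used in Theorem \ref{free}, which is sharp for $r=3$ but not for $\Delta=4$, since switching the denominator from $r+1$ to $\Delta+2$ strictly weakens the bound on that particular graph).
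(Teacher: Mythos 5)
Your proof is correct and follows exactly the route the paper intends: the corollary is stated as an immediate consequence of Theorem \ref{free}, obtained by noting that a graph of maximum degree $\Delta$ is $K_{1,\Delta+1}$-free and substituting $r=\Delta+1$, and your sharpness example $K_{1,\Delta}$ coincides with the paper's example $K_{1,r-1}$ under that same substitution.
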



\subsection{Claw-free graphs}

In the rest of this section, we characterize all claw-free graphs for which the equality holds in Theorem \ref{free}. Assume that $G$ is a claw-free graph with components $G_{1},\dots,G_{p}$. Since $\gamma_{cI}(G)=\sum_{i=1}^{p}\gamma_{cI}(G_{i})$, it follows that $\gamma_{cI}(G)=(n+s')/2$ if and only if $\gamma_{cI}(G_{i})=(n_{i}+s_{i}')/2$ for each $1\leq i\leq p$, in which $n_{i}=|V(G_{i})|$ and $s_{i}'$ is the number of strong support vertices of $G_{i}$. Notice that in this case ($r=3$) $s'=\sum_{i=1}^{p}s_{i}'=0$, unless $G_{i}=P_{3}$ for some $1\leq i\leq p$. For such a component $G_{i}$, we have $\gamma_{cI}(G_{i})=(n_{i}+s_{i}')/2$. Hence, in what follows, we may assume that $G$ is connected and $s'=0$.

We recall that a {\em $k$-sun} on $2k$ vertices is a construction starting with a Hamiltonian graph $G$ of order $k$, with Hamiltonian $k$-cycle $v_{1}\cdots v_{k}v_{1}$, then $k$ new vertices $u_{1},\dots,u_{k}$ are added so that each $u_{i}v_{i},u_{i}v_{i+1}\in E(G)$, where $v_{k+1}=v_{1}$. We call a graph of the following form a \textit{$k$-triangle} in which the number of triangles is $k-1$.
\begin{figure}[h]\vspace{-25mm}
\tikzstyle{every node}=[circle, draw, fill=white!, inner sep=0pt,minimum width=.16cm]
\begin{center}
\begin{tikzpicture}[thick,scale=.6]
  \draw(0,0) { 

+(-5.5,-5) node{}
+(-4,-5) node{}
+(-2.5,-5) node{}
+(-1,-5) node{}
+(.5,-5) node{}

+(-4.75,-6) node{}
+(-3.25,-6) node{}
+(-.25,-6) node{}

+(-5.5,-5) -- +(-4,-5) -- +(-2.5,-5)
+(-1,-5) -- +(.5,-5)
+(-4,-5) -- +(-4.75,-6) -- +(-5.5,-5) -- +(-4,-5) -- +(-3.25,-6) -- +(-2.5,-5)
+(-1,-5) -- +(-.25,-6) -- +(.5,-5)

+(-2,-5.2) node[rectangle, draw=white!0, fill=white!100]{${\textbf{.}}$}
+(-1.75,-5.2) node[rectangle, draw=white!0, fill=white!100]{${\textbf{.}}$}
+(-1.5,-5.2) node[rectangle, draw=white!0, fill=white!100]{${\textbf{.}}$}

+(-5.5,-4.6) node[rectangle, draw=white!0, fill=white!100]{$v_{1}$}
+(-4,-4.6) node[rectangle, draw=white!0, fill=white!100]{$v_{2}$}
+(-2.5,-4.6) node[rectangle, draw=white!0, fill=white!100]{$v_{3}$}
+(-1,-4.6) node[rectangle, draw=white!0, fill=white!100]{$v_{k-1}$}
+(.6,-4.6) node[rectangle, draw=white!0, fill=white!100]{$v_{k}$}

};
\end{tikzpicture}
\end{center}\vspace{-5mm}
\caption{A $k$-triangle.}\label{fig1}
\end{figure}
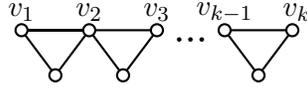

In order to characterize all connected claw-free graphs whose CID numbers equal half of their orders, we introduce $\Omega$ as the family of graphs $G$ satisfying one of the following items.\vspace{1mm}

($i$) $G$ is a $k$-sun in which the Hamiltonian graph is a cycle, or\vspace{.75mm}

($ii$) $G$ is of the form a graph $G(k_{1},\dots,k_{r})$ depicted in Figure \ref{fig2}.

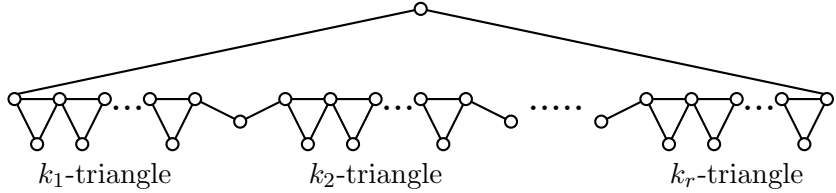
\begin{figure}[h]\vspace{-16mm}
\tikzstyle{every node}=[circle, draw, fill=white!, inner sep=0pt,minimum width=.16cm]
\begin{center}
\begin{tikzpicture}[thick,scale=.6]
  \draw(0,0) { 

+(-19,-5) node{}
+(-18,-5) node{}
+(-17,-5) node{}
+(-16,-5) node{}
+(-15,-5) node{}
+(-14,-5.5) node{}

+(-18.5,-6) node{}
+(-17.5,-6) node{}
+(-15.5,-6) node{}

+(-19,-5) -- +(-18,-5) -- +(-17,-5)
+(-16,-5) -- +(-15,-5) -- +(-14,-5.5)
+(-19,-5) -- +(-18.5,-6) -- +(-18,-5) -- +(-17.5,-6) -- +(-17,-5)
+(-16,-5) -- +(-15.5,-6) -- +(-15,-5)

+(-16.75,-5.2) node[rectangle, draw=white!0, fill=white!100]{ ${\textbf{.}}$}
+(-16.5,-5.2) node[rectangle, draw=white!0, fill=white!100]{ ${\textbf{.}}$}
+(-16.25,-5.2) node[rectangle, draw=white!0, fill=white!100]{ ${\textbf{.}}$}
+(-13,-5) node{}
+(-12,-5) node{}
+(-11,-5) node{}
+(-10,-5) node{}
+(-9,-5) node{}
+(-8,-5.5) node{}

+(-12.5,-6) node{}
+(-11.5,-6) node{}
+(-9.5,-6) node{}

+(-13,-5) -- +(-12,-5) -- +(-11,-5)
+(-10,-5) -- +(-9,-5) -- +(-8,-5.5)
+(-13,-5) -- +(-12.5,-6) -- +(-12,-5) -- +(-11.5,-6) -- +(-11,-5)
+(-10,-5) -- +(-9.5,-6) -- +(-9,-5)
+(-14,-5.5) -- +(-13,-5)

+(-10.75,-5.2) node[rectangle, draw=white!0, fill=white!100]{ ${\textbf{.}}$}
+(-10.5,-5.2) node[rectangle, draw=white!0, fill=white!100]{ ${\textbf{.}}$}
+(-10.25,-5.2) node[rectangle, draw=white!0, fill=white!100]{ ${\textbf{.}}$}
+(-7.5,-5.2) node[rectangle, draw=white!0, fill=white!100]{ ${\textbf{.}}$}
+(-7.25,-5.2) node[rectangle, draw=white!0, fill=white!100]{ ${\textbf{.}}$}
+(-7,-5.2) node[rectangle, draw=white!0, fill=white!100]{ ${\textbf{.}}$}
+(-6.75,-5.2) node[rectangle, draw=white!0, fill=white!100]{ ${\textbf{.}}$}
+(-6.5,-5.2) node[rectangle, draw=white!0, fill=white!100]{ ${\textbf{.}}$}
+(-6,-5.5) node{}
+(-5,-5) node{}
+(-4,-5) node{}
+(-3,-5) node{}
+(-2,-5) node{}
+(-1,-5) node{}
+(-10,-3) node{}

+(-4.5,-6) node{}
+(-3.5,-6) node{}
+(-1.5,-6) node{}

+(-5,-5) -- +(-4,-5) -- +(-3,-5)
+(-2,-5) -- +(-1,-5)
+(-5,-5) -- +(-4.5,-6) -- +(-4,-5) -- +(-3.5,-6) -- +(-3,-5)
+(-2,-5) -- +(-1.5,-6) -- +(-1,-5)
+(-6,-5.5) -- +(-5,-5)
+(-19,-4.9) -- +(-10,-3) -- +(-1,-4.9)

+(-2.75,-5.2) node[rectangle, draw=white!0, fill=white!100]{${\textbf{.}}$}
+(-2.5,-5.2) node[rectangle, draw=white!0, fill=white!100]{${\textbf{.}}$}
+(-2.25,-5.2) node[rectangle, draw=white!0, fill=white!100]{${\textbf{.}}$}

+(-17,-6.7) node[rectangle, draw=white!0, fill=white!100]{$k_{1}$-triangle}
+(-11,-6.7) node[rectangle, draw=white!0, fill=white!100]{$k_{2}$-triangle}
+(-3,-6.7) node[rectangle, draw=white!0, fill=white!100]{$k_{r}$-triangle}

};
\end{tikzpicture}
\end{center}\vspace{-4mm}
\caption{The graph $G(k_{1},\dots,k_{r})$.}\label{fig2}
\end{figure}

We are now in a position to present the main theorem of this section.

\begin{theorem}\label{Claw}
Let $G$ be a connected claw-free graph of order $n\geq4$. Then, $\gamma_{cI}(G)=n/2$ if and only if $G\in \Omega$.
\end{theorem}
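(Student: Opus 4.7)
My plan is to establish both implications separately, using the tightness in Theorem \ref{free} as the central engine for the converse direction.

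For the ``if'' direction, let $G \in \Omega$. I will display a CID function of weight exactly $n/2$, which together with the lower bound $\gamma_{cI}(G) \geq n/2$ yields equality. Connectedness of $G$ with $n \geq 4$ ensures $G$ has no strong support vertex, since such a vertex together with any further neighbour would create a claw; hence Theorem \ref{free} applies with $r=3$ and $s'=0$. If $G$ is a $k$-sun with Hamiltonian cycle $v_1 v_2 \cdots v_k v_1$ and triangle-caps $u_1, \ldots, u_k$, the assignment $f(v_i)=1$, $f(u_i)=0$ is a CID function of weight $k = n/2$ (the $u_i$'s are independent and each sees two $1$'s). If $G = G(k_1, \ldots, k_r)$, I assign $1$ to every top-path vertex of every $k_i$-triangle and $0$ to every triangle-cap, every connector, and the apex; the resulting weight is $\sum_{i=1}^{r} k_i = n/2$, and one verifies directly that every $0$-vertex has exactly two $1$-neighbours and that the set of $0$-vertices is independent.

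For the ``only if'' direction, let $f$ be a $\gamma_{cI}(G)$-function. Making each inequality in (\ref{INE1}) tight extracts: $V_2 = \emptyset$; every $u \in V_0$ has exactly two neighbours (both in $V_1$) and therefore $\deg_G(u) = 2$; every $v \in V_1$ has exactly two neighbours in $V_0$; and $|V_0| = |V_1| = n/2$. A direct claw-freeness argument at $v \in V_1$, using that its two $V_0$-neighbours are non-adjacent and each of degree $2$ (so together they can absorb at most two further vertices), shows that $v$ has at most two $V_1$-neighbours. Thus $G[V_1]$ has maximum degree at most $2$ and decomposes into disjoint paths and cycles.

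I then split on whether $G[V_1]$ contains a cycle. If a cycle $C = v_1 \cdots v_s v_1$ appears in $G[V_1]$, claw-freeness at each $v_i$ (the only non-edge inside $N(v_i)$ that is a priori available is $u-u'$, so the two $V_0$-neighbours $u,u'$ of $v_i$ must each be adjacent to exactly one of $v_{i-1}, v_{i+1}$) pairs each $V_0$-vertex bijectively with an edge of $C$ as its triangle-cap; combined with $|V_0| = s$, this identifies $G$ with the $s$-sun on $V(C) \cup V_0$. A parallel claw-freeness check rules out a $V_0$-vertex bridging two different components of $G[V_1]$, so connectedness of $G$ forces $G[V_1] = C$ and $G$ lands in case (i) of $\Omega$. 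If $G[V_1]$ has no cycle, label its path components $P_1, \ldots, P_r$ with $|P_i| = k_i \geq 1$. The interior-vertex analysis again produces triangle-caps on every chain edge. At each path endpoint one of the two $V_0$-neighbours is such an internal cap, while the other is a ``bridge'' whose second $V_1$-neighbour, by a further claw-freeness check, cannot be an interior vertex of any path and is therefore another path endpoint (or the sole vertex of a trivial path). Treating the $P_i$'s as super-vertices and bridges as super-edges then yields a connected $2$-regular multigraph — a single super-cycle — which reproduces the construction of $G(k_1, \ldots, k_r)$ in Figure \ref{fig2} and places $G$ in case (ii) of $\Omega$.

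The main obstacle will be the bookkeeping in the path case, in particular handling the degenerate super-cycles of length $r=1$ (a self-loop, corresponding to a single chain closed by the apex) and $r=2$ (a pair of parallel edges, corresponding to two chains linked by the connector and the apex), as well as trivial single-vertex components $P_i$ where both $V_0$-neighbours must serve as bridges to distinct paths. The claw-freeness checks at path endpoints and at bridge-vertices are routine, but must be carried out systematically so as to rule out any gluing of path components outside the pattern of Figure \ref{fig2}.
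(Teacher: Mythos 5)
Your proposal follows essentially the same route as the paper: the forward direction extracts tightness in inequality (\ref{INE1}) to get $V_2=\emptyset$, every $V_0$-vertex of degree two with both neighbours in $V_1$, every $V_1$-vertex with exactly two $V_0$-neighbours, and $\Delta(G[V_1])\le 2$ via the same claw-freeness argument, after which the cycle components yield $k$-suns and the path components, glued by the degree-two ``bridge'' vertices, reassemble into $G(k_1,\dots,k_r)$; the converse exhibits the same weight-$n/2$ CID functions against the lower bound of Theorem \ref{free} with $s'=0$. Your super-cycle/multigraph packaging of the gluing step is only a rephrasing of the paper's iterative construction, so the two arguments coincide in substance.
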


\begin{proof}
Suppose first that the equality holds for a connected graph $G$. Then all inequalities in (\ref{INE1}) hold with equality for $(r,s')=(3,0)$, necessarily. In particular, $V_{2}=\emptyset$ by equality instead of the first inequality in (\ref{INE1}). This shows that every vertex in $V_{0}$ has at least two neighbors in $V_{1}$. Taking into account this, the equalities $2|V_{0}|=2|V_{0}\setminus A|=|[V_{0}\setminus A,V_{1}]|$ shows that every vertex in $V_{0}$ has precisely two neighbors in $V_{1}$ (note that $A=\emptyset$ since $V_{2}=\emptyset$). On the other hand, $|[V_{0}\setminus A,V_{1}]|=2|V_{1}|$ follows that every vertex in $V_{1}$ is adjacent to exactly two vertices in $V_{0}$.

Let $H=G[V_{1}]$ be the subgraph of $G$ induced by $V_{1}$. Suppose to the contrary that there exists a vertex $v\in V(H)$ for which $deg_{H}(v)\geq3$. Since $G$ is claw-free and $v$ has precisely two neighbors in $V_{0}$, it follows that every vertex in $N_{H}(v)$ must be adjacent to at least one of those two neighbors of $v$, say $v_{1}$ and $v_{2}$, in $V_{0}$. This implies that $deg(v_{1})\geq3$ or $deg(v_{2})\geq3$, which is a contradiction. The above argument guarantees that $\Delta(H)\leq2$ and therefore $H$ is isomorphic to a disjoint union of some cycles and paths.

Let $H'$ be a $t$-cycle $v_{1}v_{2}\cdots v_{t}v_{1}$ as a component of $H$. Let $v_{11}$ and $v_{12}$ be the neighbors of $v_{1}$ in $V_{0}$. Since $G$ is claw-free and $v_{11}v_{12}\notin E(G)$, both $v_{2}$ and $v_{t}$ have at least one neighbor in $\{v_{11},v_{12}\}$. Moreover, since $deg(v_{11})=deg(v_{12})=2$, both $v_{2}$ and $v_{t}$ have precisely one neighbor in $\{v_{11},v_{12}\}$. We may assume that $v_{11}v_{t},v_{12}v_{2}\in E(G)$. Let $v_{22}$ be the second neighbor of $v_{2}$ in $V_{0}$. Again, since $G$ is claw-free and $v_{12}v_{22},v_{12}v_{3}\notin E(G)$, we have $v_{22}v_{3}\in E(G)$. Iterating this process, it results in a $t$-sun for which $v_{1}v_{2}\cdots v_{t}v_{1}$ is the initiating Hamiltonian cycle. Notice that no vertex of the $t$-sun has other neighbors in $G$. Now $G$ satisfies the condition ($i$) because it is connected.

In what follows, we may assume that no component of $H$ is a cycle. If $H$ is edgeless, then $G$ is isomorphic to the cycle $C_{n}$ in which $n$ is even. It is then easy to see that $G$ is of the form $G(k_{1},\dots,k_{r})$, where $k_{1}=\cdots=k_{r}=1$. Suppose now that $H$ is not edgeless and let $H''$ be a path $v_{1}v_{2}\cdots v_{t}$ as a component of $H$ in which $t\geq2$. Let $v_{21}$ and $v_{22}$ be the neighbors of $v_{2}$ in $V_{0}$. Note that $v_{1}$ must be adjacent to at least one of $v_{21}$ and $v_{22}$, for otherwise we would have a claw as an induced subgraph of $G$. If $v_{1}$ is adjacent to both $v_{21}$ and $v_{22}$, then $G[v_{1},v_{2},v_{21}]$ is a $2$-triangle. In such a case, we have $t=2$ (for otherwise since $G[v_{2},v_{3},v_{21},v_{22}]$ is not an induced claw in $G$, it follows that $v_{22}v_{3}\in E(G)$ or $v_{21}v_{3}\in E(G)$ and therefore $deg(v_{22})\geq3$ or $deg(v_{21})\geq3$ which are impossible). Therefore, $G\cong G[v_{1},v_{2},v_{21},v_{22}]\cong K_{4}-v_{21}v_{22}$. In such a case, $G$ is of the form $G(k_{1},\dots,k_{r})$ in which $k_{1}=2$ and $k_{2}=\cdots k_{r}=0$ since it is connected. So, $G\in \Omega$. In what follows, we assume that $v_{1}$ is adjacent to only one of $v_{21}$ and $v_{22}$. We then continue with $v_{2}$. Since $G$ is claw-free, it follows that both $v_{1}$ and $v_{3}$ must have at least one neighbor in $\{v_{21},v_{22}\}$. Moreover, $deg(v_{21})=deg(v_{22})=2$ shows that both $v_{1}$ and $v_{3}$ have exactly one neighbor in $\{v_{21},v_{22}\}$. So, we may assume that $v_{1}v_{21},v_{3}v_{22}\in E(G)$. Similarly, $v_{3}$ has two neighbors $v_{31}$ and $v_{32}$ in $V_{0}$, in which we may assume that $v_{31}=v_{22}$. By repeating this process we obtain a $t$-triangle on the set of vertices $K=\{v_{1},v_{2},\dots,v_{t},u_{12},u_{23},\dots,u_{(t-1)t}\}$ in which $u_{i(i+1)}$ is adjacent to both $v_{i}$ and $v_{i+1}$, for $1\leq i\leq t-1$. We now distinguish two cases depending on $S=(N(v_{1})\cap N(v_{t}))\setminus\{v_{2}\}$.

\textit{Case $1$.} Suppose that $S\neq\emptyset$. Let $u_{1t}$ be in $S$. Since the path $v_{1}v_{2}\cdots v_{t}$ is a component of $H$, we have $S\cap V(H)=\emptyset$. Therefore, $u_{1t}\in V_{0}$. Since both $v_{1}$ and $v_{t}$ have exactly two neighbors in $V_{0}$, it follows that $S=\{u_{1t}\}$. In such a case, $G$ is isomorphic to the subgraph induced by $K\cup\{u_{1t}\}$ that is of the form $G(t,0,\dots,0)$ because it is connected.

\textit{Case $2$.} Suppose now that $S=\emptyset$. This implies that the subgraph induced by $K$ is a $t$-triangle. Then $v_{t}$ is adjacent to a vertex $x'\neq u_{(t-1)t}$ in $V_{0}$ and $x'$ is adjacent to vertex $x\notin V(H'')$, where $H''$ is the path on the vertices $v_{1},\dots,v_{t}$ (note that if $x\in V(H'')$, then $x=v_{j}\in\{v_{1},\dots,v_{t-1}\}$. If $j=1$, then $G$ is again of the form $G(t,0,\dots,0)$ and hence $G\in \Omega$. If $j\geq2$, then one of the vertices in $\{v_{1},\dots,v_{t-1}\}$ has at least three neighbors in $V_{0}$ which is impossible). Since $V_{0}$ is independent, $x$ belongs to a component $H'''$ of $H$. Since $H$ does not have a cycle as a component, it follows that $H'''$ is a path. In such a case, the vertices of $H'''$ belong to a $|V(H''')|$-triangle by a similar fashion. Iterating this process we obtain some $|V(H''_{1})|,\dots,|V(H''_{r})|$-triangles constructed as above, in which $H''_{1}=H''$, $H''_{2}=H'''$ and $r$ is the largest integer for which there exists such a $|V(H''_{r})|$-triangle. Let $H''_{r}=w_{1}\cdots w_{p}$ and let $w_{p}$ be the vertex which has only one neighbor in $V_{0}$ of the subgraph induced by $V(H''_{1})\cup\cdots\cup V(H''_{r})$. Similar to Case $1$, there exists a vertex $u_{1p}\in V_{0}$ such that both $v_{1}$ and $w_{p}$ are adjacent to it. Therefore, $G$ is of the form $G(|V(H''_{1})|,\dots,|V(H''_{r})|)$. In both cases above we have concluded that $G\in \Omega$.

Conversely, let $G\in \Omega$. Suppose first that $G$ satisfies the condition ($ii$). Let $v_{1},\dots,v_{t}$ be those vertices of degree at lest three of $G(k_{1},\dots,k_{r})$. Then $f(v_{i})=1$ for $1\leq i\leq t$, and $f(v)=0$ for the other vertices defines a CID function with weight half of the order. Therefore, $\gamma_{cI}(G_{1})\leq n/2$. Finally, we suppose that $G$ satisfies the condition ($i$). Let $w_{1}w_{2}\cdots w_{q}w_{1}$ be the $q$-cycle on the vertices of degree four. Then the assignment $g(w_{i})=1$ for $1\leq i\leq q$, and $g(w)=0$ for the other vertices defines a CID function of $G_{2}$ with weigh half of its order. So, $\gamma_{cI}(G_{2})\leq n/2$. Therefore, in both cases we have $\gamma_{cI}(G)=n/2$. This completes the proof.
\end{proof}


\section{Graphs with small or large CID numbers}

Obviously, $2\leq \gamma_{cI}(G)\leq n$ for any connected graph $G$ of order $n\geq2$. Our aim in this section is to characterize the connected graphs $G$ with values for $\gamma_{cI}(G)$ equal or near to the bounds above.

\begin{proposition}\label{pro4}
Let $G$ be a connected graph. Then $\gamma_{cI}(G)=2$ if and only if $G\in \{K_{1,n},\ K_{2,n}\}$ or $G=K^*_{2,n}$, for $n\geq1$, which is obtained from $K_{2,n}$ by joining the two vertices in its $2$-vertex partite set.
\end{proposition}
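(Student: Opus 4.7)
The plan is to prove both directions straightforwardly, with the forward direction being a case analysis based on how a minimum CID function of weight $2$ can distribute its labels.

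\emph{Reverse direction (easy).} For each candidate graph, I exhibit a CID function of weight $2$ and note that $\gamma_{cI}(G)\ge 2$ for any connected graph of order at least two. The latter follows because a function of weight $\le 1$ on a connected graph of order $\ge 2$ either leaves some $V_0$-vertex with $f(N(v))=0$, or fails to produce a vertex cover. For $K_{1,n}$ assign $2$ to the center and $0$ elsewhere; for $K_{2,n}$ and $K^{*}_{2,n}$ assign $1$ to each vertex of the $2$-element partite set and $0$ elsewhere. One checks in each case that $V_{0}$ is independent, the support is a vertex cover, and each vertex in $V_0$ receives weight at least $2$ from its neighborhood.

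\emph{Forward direction.} Assume $\gamma_{cI}(G)=2$ and $G$ is connected, and let $f$ be a $\gamma_{cI}(G)$-function. Since $\omega(f)=|V_1|+2|V_2|=2$, there are exactly two sub-cases to analyze.

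\textbf{Case 1:} $|V_{2}|=1$ and $|V_{1}|=0$. Let $V_2=\{v\}$. The set $V_{1}\cup V_{2}=\{v\}$ is a vertex cover, so every edge of $G$ is incident with $v$. Combined with connectedness and $|V(G)|\ge 2$, every vertex other than $v$ is a leaf adjacent to $v$. Moreover, the Italian condition $f(N(u))\ge 2$ is automatic from $f(v)=2$. Hence $G\cong K_{1,n}$ for some $n\ge 1$.

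\textbf{Case 2:} $|V_{2}|=0$ and $|V_{1}|=2$. Let $V_1=\{a,b\}$. Since $V_{0}$ must be independent, every edge of $G$ has an endpoint in $\{a,b\}$. For each $u\in V_{0}$, the inequality $f(N(u))\ge 2$ forces $u$ to be adjacent to both $a$ and $b$ (as all other neighbors contribute $0$). If $V_{0}=\emptyset$, then $G$ is $K_2=K_{1,1}$ by connectedness. Otherwise, writing $V_{0}=\{u_{1},\dots,u_{n}\}$ with $n\ge 1$, the graph $G$ contains all edges $au_i$ and $bu_i$ and no edges $u_iu_j$, with $ab$ possibly present. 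Thus $G\cong K_{2,n}$ if $ab\notin E(G)$ and $G\cong K^{*}_{2,n}$ if $ab\in E(G)$, completing the classification.

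The only subtlety is checking the weight-$1$ exclusion and being careful with the degenerate case $V_0=\emptyset$ in Case $2$, which lands inside the family $\{K_{1,n}\}$ with $n=1$. Otherwise the proof is a routine case split, and no genuine obstacle appears.
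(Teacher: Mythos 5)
Your proof is correct and follows essentially the same route as the paper's: both argue the reverse direction by exhibiting weight-$2$ CID functions and the forward direction by splitting into the two ways a weight of $2$ can be realized ($|V_2|=1$ versus $|V_1|=2$), then reading off the structure from the vertex-cover and Italian conditions. Your treatment is if anything slightly more careful, since you justify the lower bound $\gamma_{cI}(G)\ge 2$ and handle the degenerate case $V_0=\emptyset$ explicitly.
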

\begin{proof}
Clearly, $\gamma_{cI}(K_{1,n})=\gamma_{cI}(K_{2,n})=\gamma_{cI}(K^*_{2,n})=2$.

Conversely, let $\gamma_{cI}(G)=2$ and let $f$ be a $\gamma_{cI}(G)$-function. Hence, one of the following situations holds.

(a) There exists a vertex $v$ for which $f(v)=2$. In such a case, the other vertices are assigned $0$ under $f$, and are independent and adjacent to $v$. Therefore $G=K_{1,n}$.

(b) There exist two vertices $v$ and $u$ for which $(f(u),f(v))=(1,1)$ and $f(w)=0$ for the other vertices. Note that the vertices in $V(G)\setminus\{u,v\}$ are independent and adjacent to both $u$ and $v$. Therefore $G\in \{K_{2,n},K^*_{2,n}\}$.
\end{proof}

We now give the characterization of all connected graphs $G$ with $\gamma_{cI}(G)=3$. To this aim, we define the families $\mathcal{R}_{i}$, $1\leq i\leq7$, of graphs $G$ as follows. We first fix some necessary notation for such constructions. For the given vertices $x$, $y$ and $z$ of the graph $G$ in construction, we shall use $V_{x}=\{v\in V(G)\mid N(v)=\{x\}\}$, $V_{x,y}=\{v\in V(G)\mid N(v)=\{x,y\}\}$ and $V_{x,y,z}=\{v\in V(G)\mid N(v)=\{x,y,z\}\}$. Also these sets satisfy that $V_{x}\cup V_{x,y}\cup V_{x,y,z}$ is an independent set.\vspace{1.75mm}

$\mathcal{R}_{1}$: We begin with two nonadjacent vertices $x$ and $y$. Then, to obtain a graph $G\in \mathcal{R}_{1}$ we add two nonempty sets of type $V_{x}$ and $V_{x,y}$.

$\mathcal{R}_{2}$: It is the family of all graphs $G$ obtained by adding the edge $xy$ to the graphs in the family $\mathcal{R}_{1}$.

$\mathcal{R}_{3}$: We begin with three independent vertices $x$, $y$ and $z$. Then a graph $G\in \mathcal{R}_{3}$ satisfies one of the following conditions. ($a_{3}$) $V_{x,y},V_{y,z}\neq \emptyset$, ($b_{3}$) only one of $V_{x,y}$ and $V_{y,z}$ is an empty set and $V_{x,y,z}\neq \emptyset$, ($c_{3}$) $V_{x,y}\cup V_{y,z}=\emptyset$ and $|V_{x,y,z}|\geq3$.

$\mathcal{R}_{4}$: We begin with an edge $xy$ and a vertex $z$. Then, $G\in \mathcal{R}_{4}$ satisfies ($a_{4}$) $V_{y,z}\neq \emptyset$, or ($b_{4}$) $V_{y,z}=\emptyset$ and $V_{x,y,z}\neq \emptyset$.

$\mathcal{R}_{5}$: We begin with a path $xyz$. Then, $G\in \mathcal{R}_{5}$ satisfies ($a_{5}$) $V_{x,y}\cup V_{y,z}\neq \emptyset$, or ($b_{5}$) $V_{x,y}\cup V_{y,z}=\emptyset$ and $|V_{x,y,z}|\geq2$.

$\mathcal{R}_{6}$: We begin with the edges $xy$ and $xz$. Then, one of the following conditions holds for a graph $G\in \mathcal{R}_{6}$: ($a_{6}$) $V_{x,y}\neq \emptyset$, ($b_{6}$) $V_{x,y}=\emptyset$ and both $V_{y,z}$ and $V_{x,y,z}$ are nonempty sets, ($c_{6}$) $V_{x,y}=\emptyset$ and $|V_{x,y,z}|\geq2$.

$\mathcal{R}_{7}$: We begin with a $3$-cycle $xyzx$. Then, $G\in \mathcal{R}_{7}$ satisfies ($a_{7}$) $V_{x,y},V_{y,z}\neq \emptyset$, or ($b_{7}$) $V_{x,y}$ or $V_{y,z}$ equals empty set and $V_{x,y,z}\neq \emptyset$.\vspace{1.75mm}

With these families above in mind, we are able to present the next characterization.

\begin{theorem}\label{theo1}
Let $G$ be a connected graph of order $n$. Then, $\gamma_{cI}(G)=3$ if and only if $G\in \cup_{i=1}^{7}\mathcal{R}_{i}$.
\end{theorem}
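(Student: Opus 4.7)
The plan is to prove both directions of the equivalence, with the backward direction handled by explicit construction and the forward direction by a case analysis on the weight distribution of a minimum CID function.

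For the backward direction (sufficiency), I would go family by family. For $\mathcal{R}_1$ and $\mathcal{R}_2$ I take $f(x)=2$, $f(y)=1$, and $0$ on every other vertex; for $\mathcal{R}_3$ through $\mathcal{R}_7$ I take $f(x)=f(y)=f(z)=1$ and $0$ elsewhere. In each case the set assigned positive values is the vertex cover used in the construction, and the defining constraints on the sets $V_x$, $V_{x,y}$, $V_{x,y,z}$, etc., guarantee that $V_0$ is independent and that every $v\in V_0$ has $f(N(v))\geq 2$; hence $\gamma_{cI}(G)\leq 3$. To obtain $\gamma_{cI}(G)\geq 3$ I invoke Proposition \ref{pro4}: it suffices to verify that $G$ is none of $K_{1,n}$, $K_{2,n}$, $K_{2,n}^*$. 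This is precisely where the nonemptiness hypotheses do the work — for instance, $V_x\neq\emptyset$ and $V_{x,y}\neq\emptyset$ in $\mathcal{R}_1$ prevent $G$ from collapsing to a star or to a complete bipartite form, and $|V_{x,y,z}|\geq 2$ in the last subcase of $\mathcal{R}_6$ prevents $G$ from being $K_{2,2}^\ast$.

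For the forward direction, let $f$ be a $\gamma_{cI}(G)$-function of weight $3$. Since all values lie in $\{0,1,2\}$, either $(|V_2|,|V_1|)=(1,1)$ or $(|V_2|,|V_1|)=(0,3)$. In the first case, write $V_2=\{x\}$ and $V_1=\{y\}$. Because $V_0$ is independent and each $v\in V_0$ requires $f(N(v))\geq 2$, the vertex $x$ must be in $N(v)$, so $V_0\subseteq V_x\cup V_{x,y}$. Minimality forces both $V_x\neq\emptyset$ and $V_{x,y}\neq\emptyset$, for otherwise $G$ would be of the form covered by Proposition \ref{pro4}, giving $\gamma_{cI}(G)\leq 2$. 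Splitting on whether $xy\in E(G)$ then places $G$ in $\mathcal{R}_1$ or $\mathcal{R}_2$. In the second case, write $V_1=\{x,y,z\}$, so each $v\in V_0$ has at least two neighbors in $\{x,y,z\}$ and thus $V_0\subseteq V_{x,y}\cup V_{x,z}\cup V_{y,z}\cup V_{x,y,z}$. I would case-split on the number of edges in $G[\{x,y,z\}]$: zero edges corresponds to $\mathcal{R}_3$, one edge to $\mathcal{R}_4$, two edges (a $P_3$) to $\mathcal{R}_5$ or $\mathcal{R}_6$ depending on the chosen labeling, and three edges (a triangle) to $\mathcal{R}_7$. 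Inside each of these branches, I refine into the listed subcases $(a_i),(b_i),(c_i)$ by analyzing which of the four pair/triple sets are empty, using connectedness together with the assumption $\gamma_{cI}(G)=3$ (again invoking Proposition \ref{pro4} to rule out the possibility that an emptier configuration would already admit a CID function of weight $2$).

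The main obstacle I anticipate is the bookkeeping in the forward direction: showing that the nonemptiness conditions $(a_i),(b_i),(c_i)$ are exactly those forced by combining connectedness with $\gamma_{cI}(G)\neq 2$. The two-edge subcase is the most delicate, since it must be reconciled with the author's two separate families $\mathcal{R}_5$ and $\mathcal{R}_6$, which correspond to two choices of which vertex of the $P_3$ is called the middle; I would fix a convention (say, $y$ as the middle whenever possible) and then permute labels to match the family descriptions. Apart from this, the argument is routine structural case-checking, and no deeper tool beyond Proposition \ref{pro4} and the basic requirement that $V_0$ be independent with $f(N(v))\geq 2$ is needed.
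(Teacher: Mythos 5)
Your proposal is correct and follows essentially the same route as the paper: the same two explicit labelings for sufficiency (weight $2{+}1$ for $\mathcal{R}_1,\mathcal{R}_2$ and $1{+}1{+}1$ for $\mathcal{R}_3$--$\mathcal{R}_7$), and for necessity the same split into $(|V_2|,|V_1|)=(1,1)$ versus $(0,3)$ followed by a case analysis on $G[\{x,y,z\}]$, using the weight-$2$ characterization and connectedness to force the nonemptiness conditions. Your explicit appeal to Proposition \ref{pro4} for the lower bound, and your remark that $\mathcal{R}_5$ and $\mathcal{R}_6$ are the two-edge case under different labeling conventions, only make explicit what the paper leaves implicit.
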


\begin{proof}
If $G\in \mathcal{R}_{1}\cup \mathcal{R}_{2}$, then the function $g$ with $g(x)=2$, $g(y)=1$ and $g(v)=0$ for any other distinct vertex is a CID function of $G$ with weight $\gamma_{cI}(G)=3$. If $G\in \cup_{i=3}^{7}\mathcal{R}_{i}$, then the function $g$ with $g(x)=g(z)=g(y)=1$ and $g(v)=0$ for the remaining vertices defines a CID function of $G$ with weight $\gamma_{cI}(G)=3$ too.

Conversely, let $f:V(G)\rightarrow \{0,1,2\}$ be a minimum CID function with weight $\omega(f)=3$. Let $V_i=\{v\in V| f(v)=i\}$ where $i\in \{0,1,2\}$. We consider two cases depending on $V_{2}$.

\textit{Case 1.} $V_2\neq \emptyset$. Hence, there exists a unique vertex $x$ with weight $2$ under $f$ and one vertex $y$ with $f(y)=1$. Note that the other vertices must be assigned $0$ under $f$ and must belong to $V_{x}\cup V_{x,y}$ type sets. If $V_{x}=\emptyset$, then $\gamma_{cI}(G)=2$, which is impossible. Also, if $V_{x,y}=\emptyset$, then either $\gamma_{cI}(G)=2$ or $G$ is disconnected, a contradiction again. Therefore, $G\in \mathcal{R}_{1}\cup \mathcal{R}_{2}$.

\textit{Case 2.} $V_2=\emptyset$. We assume that $(f(x),f(y),f(z))=(1,1,1)$ for some vertices $x$, $y$ and $z$, and $f(v)=0$ for the other vertices $v$. Note that the other vertices $v$ are independent.

Let first $G[\{x,y,z\}]$ be edgeless. If the sets $V_{x,y}$ and $V_{y,z}$ are nonempty, then $G\in \mathcal{R}_{3}$. If both $V_{x,y}$ and $V_{y,z}$ are empty, then $|V_{x,y,z}|\geq3$, otherwise $\gamma_{cI}(G)=2$ or $G$ is disconnected. If only one of $V_{x,y}$ and $V_{y,z}$, say $V_{x,y}$, is empty, then $V_{x,y,z}\neq \emptyset$, for otherwise $G$ would be disconnected. This shows that $G\in \mathcal{R}_{3}$.

Let $y$ be adjacent to only one of $x$ and $z$, say $x$, and $xz\notin E(G)$. If the set $V_{y,z}=\emptyset$, then $V_{x,y,z}\neq \emptyset$, for otherwise $G$ is disconnected. So, $G\in \mathcal{R}_{4}$.

Let $G[\{x,y,z\}]$ be isomorphic to the path $xyz$. Let $V_{x,y}\cup V_{y,z}=\emptyset$. If $|V_{x,y,z}|\leq1$, then $\gamma_{cI}(G)=2$ which is impossible. Thus, $|V_{x,y,z}|\geq2$, and so, $G\in \mathcal{R}_{5}$.

Assume now that $xy,xz\in E(G)$ and $yz\notin E(G)$. Suppose that $V_{x,y}=\emptyset$. Hence, we must have $V_{x,y,z}\neq \emptyset$, for otherwise $\gamma_{cI}(G)=2$. If $|V_{x,y,z}|=1$, then $V_{y,z}\neq \emptyset$, for otherwise we have again $\gamma_{cI}(G)=2$. Therefore, $V_{y,z}\neq \emptyset$. This implies that $G\in \mathcal{R}_{6}$.

Finally, assume that $G[\{x,y,z\}]$ is isomorphic to the $3$-cycle $xyzx$. Suppose that at least one of the sets $V_{x,y}$ and $V_{y,z}$ is empty. Hence, $V_{x,y,z}$ is a nonempty set, for otherwise $\gamma_{cI}(G)=2$. This completes the proof.
\end{proof}

The characterizations of graphs $G$ of order $n$ with $\gamma_{cI}(G)\in \{n-1,n\}$ were given in \cite{fan}. In fact, there was shown that $\gamma_{cI}(G)=n$ if and only if $\Delta(G)\leq1$, and moreover, that $\gamma_{cI}(G)=n-1$ if and only if $G\in \{P_{3},P_{4},K^t_m\}$ where $K^t_m$ is a graph obtained from the complete graph $K_m$ by joining $t$ leaves to $t$ vertices of $K_m$, where $m\geq3$ and $0\leq t\leq m$.

In the rest of this section, we characterize the family of graphs $G$ of order $n$ with $\gamma_{cI}(G)=n-2$. For the sake of convenience, we introduce some necessary notations. Let $H_{0}$ stand for a complete graph. For $m\geq1$, let $H_m$ be a graph obtained from a complete graph by removing $m$ edges subject to $\delta(H_m)\geq2$ and $\alpha(H_m)=2$. For $m\geq2$, let $H_m^k$ be a graph obtained from the graph $H_m$ by joining $k$ leaves to $k$ distinct vertices of $H_m$ in which $0\le k\le n$. Let $H_1^k$ be a graph obtained from the graph $H_1$ by joining $k$ leaves to $k$ or $k-1$ vertices, where $0\le k\le n+1$. Let $H_0^k$ be a graph obtained from a complete graph $K_n$ by joining $k$ leaves to $k-1$ vertices of $K_n$ where $2\le k\le n+1$, and $H_0^1$ be a graph obtained from a complete graph by joining one leaf to one vertex of it.\\
We also consider $P_{4}^k$ as a graph obtained from $P_4$ by joining $k$ leaves to exactly $k$ vertices of $P_4$, where $1\leq k\leq4$.\vspace{2mm}

Let $\mathcal{G}_1$ be the family of graphs $G$ of order $n\geq5$ with $\alpha(G)=2$ for which one of the following conditions holds.

\textbf{1.1.} $G=H_m$ for $m\geq1$, or\vspace{.5mm}

\textbf{1.2.} $G$ has only one leaf $v$ adjacent to a support vertex $u$ of degree $2$ such that $G-\{v,u\}=H_0=K_{n-2}$, or\vspace{.5mm}

\textbf{1.3.} $G$ has only one leaf $v$ adjacent to a support vertex $u$ of degree at least $3$, $G-\{u,v\}=H_0=K_{n-2}$ and the vertex $u$ is not adjacent to all vertices of $H_0$.\vspace{2mm}

Let $\mathcal{G}_2$ be a family of graphs $G$ with $\alpha(G)\geq3$ with at least one leaf, for which one of the following properties holds. In order to be next used, assume $I=\{v_1,v_2,\ldots,v_{\alpha(G)}\}$ is a maximum independent set of $G$.\vspace{.5mm}

\textbf{2.1.} Let all vertices in $I$ be leaves. Moreover, let $G-I=H_0$ and consider $G$ has precisely one support vertex $x$ with two adjacent leaves, and the other support vertices have only one adjacent leaf. In such a situation, we have $G=H_0^{\alpha(G)}$.\vspace{.5mm}

\textbf{2.2.} Let $G-I=H_1=K_r-e$ where $e=uw$. Let every support vertex of $G$ have exactly one adjacent leaf, or one of $u$ or $w$ has two adjacent leaves and the other support vertices have one adjacent leaf.\vspace{.5mm}

\textbf{2.3}. Let $G-I=H_m=K_r-\{uw_1,uw_{2},\cdots,uw_{m}\}$ ($m\geq2$). Let every support vertex of $G$ be adjacent to exactly one leaf, or the vertex $u$ is adjacent to two leaves and the other support vertices have only one adjacent leaf.\vspace{.5mm}

\textbf{2.4}. Let $G-I=H_m=K_r-\{u_1w_1,u_2w_{2},\cdots,u_mw_{m}\}$ for $m\geq2$, in which the set $\{u_1w_1,u_2w_{2},\cdots,u_mw_{m}\}$ has at least two independent edges. Moreover, suppose that any support vertex has exactly one adjacent leaf.\vspace{1mm}

\textbf{3.1.} Suppose that all vertices in $I\setminus \{v_1\}$ are leaves. We set $G''=G-(I\setminus \{v_1\})$. Let $G''=H_{0}$. Moreover, we let $G$ have exactly one support vertex $x$ with two adjacent leaves and the other support vertices have one adjacent leaf. In such a situation, we have $G=H_0^{\alpha(G)-1}$.\vspace{.5mm}

\textbf{3.2.} Suppose that all the vertices in $I\setminus \{v_1\}$ are leaves. Let $G-(I\setminus \{v_1\})=H_m=K_r-\{uw_1,uw_{2},\cdots,uw_{m}\}$ in which $u$ is a support vertex with two adjacent leaves and the other support vertices are adjacent to only one leaf.\vspace{.5mm}

\textbf{3.3.} Suppose that all the vertices in $I\setminus \{v_1\}$ are leaves. We suppose that $G-(I\setminus \{v_1\})=H_m=K_r-\{u_1w_1,u_2w_{2},\cdots,u_mw_{m}\}$ and all support vertices are adjacent to only one leaf.\vspace{.5mm}

\textbf{3.4}. Let exactly two vertices $v_1,v_2\in I$ be not leaves, $G=H_m^{\alpha(G)-2}$ in which $H_m=K_r-\{v_1v_{2}, v_1w_{1},\cdots,v_1w_{m-1}\}$ for $m\geq1$, and every support vertex have exactly one adjacent leaf.

\textbf{3.5}. Let exactly two vertices $v_1,v_2\in I$ be not leaves, $G=H_m^{\alpha(G)-2}$ in which $H_m=K_r-\{v_{1}v_{2},u_{1}w_{1},\cdots,u_{m-1}w_{m-1}\}$ for $m\geq1$, in which $v_{1}v_{2}$ and $u_{k}w_{k}$ have no endpoints in common for some $1\leq k\leq m-1$. Moreover, any support vertex has exactly one leaf as a neighbor.

\begin{theorem}\label{pro7}
Let $G$ be a connected graph of order $n$. Then $\gamma_{cI}(G)=n-2$ if and only if $G\in\{K_{1,3},C_{4},K_{4}-e,P_4^k\}\cup \mathcal{G}_1\cup \mathcal{G}_2$, where $e$ is an arbitrary edge.
\end{theorem}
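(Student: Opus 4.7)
The plan is to prove both directions separately. Sufficiency consists of exhibiting, for each member $G$ of the listed families, an explicit CID function of weight $n-2$ and then showing that no smaller CID function exists. The construction is canonical: assign $0$ to a carefully chosen independent set (a size-$2$ maximum independent set in $\mathcal{G}_1$, or the set of all leaves together with some additional independent vertices in $\mathcal{G}_2$), place $2$ on the strong support vertex whenever one is present in the construction, and $1$ on every remaining vertex. A direct weight count gives $n-2$ in each subfamily, and the characterizations of graphs with $\gamma_{cI}(G)\in\{n-1,n\}$ from \cite{fan}, combined with the defining conditions of each family, rule out any smaller value.

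For necessity, take a $\gamma_{cI}(G)$-function $f$ of weight $n-2$. Since $|V_1|+2|V_2|=n-2$ and $|V_0|+|V_1|+|V_2|=n$, one gets the identity $|V_2|=|V_0|-2$, so in particular $|V_0|\geq 2$. I would first dispense with the tiny orders $n\leq 4$ by inspection, which forces $G\in\{K_{1,3},C_4,K_4-e\}$, and then organize the casework according to whether $|V_0|=2$ or $|V_0|\geq 3$ (equivalently, $V_2=\emptyset$ or $V_2\neq\emptyset$).

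When $|V_0|=2$, one has $V_2=\emptyset$ and $V_0=\{u,v\}$ is a maximum independent set, so $\alpha(G)=2$. The induced subgraph $G[V_1]$ sits inside $K_{n-2}$ and is therefore of the form $H_m$ for some $m\geq 0$, while the ID condition on $u$ and $v$ forces the adjacency between $V_0$ and $V_1$ to match exactly items 1.1--1.3 of $\mathcal{G}_1$, with the exceptional $P_4^k$ appearing only in boundary situations with small $n$. When $|V_0|\geq 3$, we have $|V_2|\geq 1$, and after replacing $f$ by an extremal $\gamma_{cI}(G)$-function (for instance, maximizing $|V_2\cap S|$ where $S$ is the set of strong support vertices, in the spirit of the extremal choices used in the proofs of Theorems \ref{pro8} and \ref{Claw}), one may identify the vertices of $V_2$ with strong support vertices and $V_0$ with the leaves together with a small number of extra independent vertices. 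Setting $I=V_0$ and $H=G-I$, one shows that $H$ is close to complete: its complement contains at most a star or a matching worth of edges, which is exactly the hypothesis $G-I=H_m$ in $\mathcal{G}_2$. The final sub-split (items 2.1--2.4 versus 3.1--3.5) is then determined by how many vertices of $I$ are not leaves of $G$ (zero, one, or two), and within each by the structure of the missing edges of $H_m$ and whether a support vertex has one or two adjacent leaves.

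The main obstacle is the case analysis for necessity in the $|V_0|\geq 3$ branch: the definition of $\mathcal{G}_2$ splits along three axes (number of non-leaf vertices in $I$, configuration of missing edges in $H_m=G-I$, and whether every support has one adjacent leaf or one support has two), and careful bookkeeping is needed to assign each realized quadruple $(|V_0|,|V_2|,\text{structure of }H_m,\text{leaf pattern})$ to exactly one subfamily. The clean way to close each branch is a contrapositive argument: if the structural conditions of the matching subfamily were violated, I would construct an explicit CID function of weight at most $n-3$ (typically by swapping a $1$ on a leaf for a $0$, compensated by upgrading a neighboring non-leaf, or by merging two $1$'s on a non-strong support pair into a single $2$), contradicting the minimality of $f$. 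Executing this local-modification argument uniformly across the subfamilies is where the bulk of the work lies.
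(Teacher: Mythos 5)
Your necessity argument has a genuine gap at its very first branch point. From $|V_1|+2|V_2|=n-2$ and $|V_0|+|V_1|+|V_2|=n$ you correctly get $|V_2|=|V_0|-2$, but the ensuing claim that $|V_0|=2$ forces $V_0$ to be a \emph{maximum} independent set, hence $\alpha(G)=2$, is false: $V_0$ is an independent set of size two, nothing more. Concretely, let $G$ be obtained from $K_4-uw$ by attaching one pendant leaf to each of its four vertices. Then $n=8$, $\alpha(G)=4$, and $G$ lies in subfamily \textbf{2.2} of $\mathcal{G}_2$; yet assigning $0$ to $u$ and $w$ and $1$ to the remaining six vertices is a minimum CID function of weight $6=n-2$ with $|V_0|=2$ and $V_2=\emptyset$. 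Your $|V_0|=2$ branch would route this graph towards $\mathcal{G}_1$, all of whose members have $\alpha(G)=2$, so that branch cannot close as stated. The root cause is that you partition by a property of one chosen optimal function, while the target families are defined by the graph invariant $\alpha(G)$. The paper instead splits directly on $\alpha(G)=2$ versus $\alpha(G)\geq3$, and in the latter case first shows via Observation \ref{obser} that $G$ must have a leaf and that at most two vertices of a maximum independent set $I$ are non-leaves, before analysing $G-I$ (or $G$ minus the leaf part of $I$).

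Two further steps would also fail as sketched. In the $|V_0|\geq3$ branch you propose to identify $V_2$ with the strong support vertices, but several subfamilies of $\mathcal{G}_2$ (for instance \textbf{2.2}, \textbf{2.4} and \textbf{3.3}--\textbf{3.5}) contain no strong support vertex at all, while any optimal function with $|V_0|\geq3$ still has $|V_2|=|V_0|-2\geq1$; in the example above one such function places the value $2$ on a support vertex with a single leaf. Separately, the graphs $P_4^k$ with $1\leq k\leq4$ have order between $5$ and $8$, so they are not caught by your inspection of orders $n\leq4$, and they do not emerge from the $H_m$ machinery either (e.g.\ $P_4^4$ minus its leaves is $P_4$, which has minimum degree one and is not an $H_m$); they must be excluded explicitly at the outset, as the paper does before beginning its case analysis. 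Your local-modification arguments for the fine casework are in the same spirit as the paper's, but the proof cannot be completed until the case decomposition is anchored to $\alpha(G)$ rather than to $|V_0|$.
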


\begin{proof}
Let $G\in\{K_{1,3},C_{4},K_{4}-e,P_4^k\}\cup \mathcal{G}_1\cup \mathcal{G}_2$. Hence, by using some calculations and observations we deduce that $\gamma_{cI}(G)=n-2$.

Conversely, let $\gamma_{cI}(G)=n-2$ and $G\notin \{K_{1,3},C_{4},K_{4}-e,P_4^k\}$. This shows that $n\geq5$. Also we have $\alpha(G)\geq 2$, for otherwise $\gamma_{cI}(G)\geq \beta(G)=n-\alpha(G)\geq n-1$ which is impossible. We consider two cases depending on the value $\alpha(G)$.\vspace{1.75mm}

\textbf{Case 1.} $\alpha(G)=2$. We shall show that $G\in \mathcal{G}_1$. We note that $G$ has at most one leaf. Suppose first that $G$ has a leaf $v$ with support vertex $u$. If $G=H_0^1$ where $H_0=K_{n-1}$, then the assignment $0$ to $u$ and $1$ to other vertices gives a $\gamma_{cI}(G)$-function with weight $n-1$. So, this situation cannot happen. If $deg(u)=2$, then $u$ must be adjacent to only one vertex of $H_{0}=K_{n-2}$ since $\alpha(G)=2$. Therefore, $G$ satisfies the condition \textbf{1.2}. We now assume that $deg(u)\geq3$. Since $\alpha(G)=2$, all neighbors of $u$ different from $v$ belong to $H_{0}=K_{n-2}$. On the other hand, if $u$ is adjacent to all vertices of $H_{0}$, then $\gamma_{cI}(G)=n-1$. This is a contradiction. Therefore, $G$ satisfies the condition \textbf{1.3}.

Suppose now that $G$ has no leaves. Then, $G\cong H_{m}$ for some $m\geq1$. Thus, in all three situations above we have $G\in \mathcal{G}_1$.\vspace{1.75mm}

\textbf{Case 2.} $\alpha(G)\geq3$. Let $I=\{v_1,v_2,\ldots,v_{\alpha(G)}\}$ be a maximum independent set of $G$. Then, $G$ has at least one leaf, for otherwise Observation \ref{obser} results in $\gamma_{cI}(G)=n-\alpha(G)\le n-3$. Moreover, we may assume that $I$ contains all leaves of $G$. If $I$ has at least three non-leaf vertices, then by assigning $0$ to them and $1$ to the other vertices we get $\gamma_{cI}(G)\leq n-3$. This is a contradiction. Therefore, at most two vertices in $I$ are not leaves. Now depending on the number of non-leaf vertices in $I$ there are three subcases to be met.\vspace{1mm}

\textbf{Subcase 2.1.} All the vertices in $I$ are leaves. In this situation, each vertex of $G$ is either a support vertex or a leaf. We set $G'=G-I$. It is easy to see that $\alpha(G')\leq2$, for otherwise $\gamma_{cI}(G)\leq n-3$. We now distinguish two possibilities depending on $\alpha(G')$.

\textbf{Subcase 2.1.1.} Let $G'=H_0$ (or, equivalently $\alpha(G')=1$). Suppose that every support vertex $w$ has only one leaf. Then the assignment $0$ to one vertex of $G'$, and $1$ to the remaining vertices leads to a $\gamma_{cI}(G)$-function with weight $n-1$, a contradiction.

Let a support vertex $u$ have more than two leaves. Hence, the assignment value $2$ to $u$, $0$ to its leaves and a vertex of $H_0$ different from $u$, and $1$ to the other vertices gives a CID function with weight $n-3$. This is a contradiction. A similar argument shows that if at least two support vertices are adjacent to two leaves, then $\gamma_{cI}(G)\leq n-3$ as well.

Therefore, one support vertex has precisely two leaves and the other support vertices are adjacent to only one leaf.\vspace{1mm}

\textbf{Subcase 2.1.2.} Let $\alpha(G')=2$. We consider the following possibilities.

\textbf{Subcase 2.1.2.1.} Let $G'=H_1=K_r-e$, where $e=uw$. Suppose to the contrary that the condition \textbf{2.2} does not hold. So, we shall deal with one of the following situations.

($a$) One of the support vertices has at least three leaves or at least two support vertices have more than one leaf. This implies that $\gamma_{cI}(G)\le n-3$, a contradiction.

($b$) A support vertex $x\neq u,w$ is adjacent to two leaves. Then, by assigning $2$ to $x$, $0$ to its adjacent leaves and both $u$ and $w$, and $1$ to the other vertices, it results in $\gamma_{cI}(G)\leq n-3$. This is again a contradiction.\vspace{1mm}

\textbf{Subcase 2.1.2.2.} Let $G'=H_m=K_r-\{u_{1}w_1,u_{2}w_{2},\dots,u_{m}w_{m}\}$ for $m\geq2$. We consider the following situations.

($c$) Assume that $u_{1}=\cdots=u_{m}=u$. Suppose to the contrary that the condition \textbf{2.3} does not hold. If the vertex $u$ has at least three leaves, then the assignment $2$ to $u$, $0$ to its leaves and $w_1$, and $1$ to the other vertices gives us a $\gamma_{cI}(G)$-function with weight at most $n-3$. This is a contradiction. Therefore, $u$ is adjacent to at most two leaves. Now, let $x\neq u$ be adjacent to more than one leaf. Moreover, we may assume that $x\neq w_{2},\dots,w_{m}$. Hence, by assigning $2$ to $x$, $0$ to its leaves and to $u$ and $w_{2}$, and $1$ to the other vertices, we get a CID function with weight $n-3$. This is impossible as well.

($d$) We now suppose that the edges $u_{1}w_1,u_{2}w_{2},\dots,u_{m}w_{m}$ are not incident with one vertex. Let $x$ be a support vertex that has more than one leaf. Since there exists at least one edge $u_{k}w_{k}$ in $\{u_1w_1,u_2w_{2},\dots,u_mw_{m}\}$ which is not incident with $x$, the assignment $2$ to $x$, $0$ to its leaves and to both $u_k$ and $w_k$, and $1$ to the other vertices leads to a $\gamma_{cI}(G)$-function with weight at most $n-3$. This is a contradiction. Thus, the condition \textbf{2.4} is true.\vspace{1mm}

From now on, we assume that at least one of the vertices in $I$ is not a leaf. In such a case, we have two possible subcases.

\textbf{Subcase 2.2.} Assume all vertices in $I$ except $v_{1}$ are leaves. Let $G''=G-(I\setminus\{v_1\})$. Similarly to Subcase $2.1$, we have $\alpha(G'')\leq2$. Also, we have two possibilities depending on $\alpha(G'')\in \{1,2\}$.

\textbf{Subcase 2.2.1.} Let $G''=H_0$. Suppose to the contrary that the condition \textbf{3.1} does not hold. Assume that every support vertex $w$ has only one adjacent leaf. Hence, assigning $0$ to $v_1$ and $1$ to other vertices produces a $\gamma_{cI}(G)$-function with weight $n-1$, a contradiction.

Suppose now that there exist at least two support vertices in $G$ having more than one leaf. By assigning $2$ to these support vertices, $0$ to their leaves and to $v_1$, and $1$ to the other vertices we get a CID function with weight $n-3$, which is impossible.

Assume now there exists one support vertex $x$ in $G$ with at least three leaves. Then, by assigning $2$ to $x$, $0$ to its leaves and to $v_1$, and $1$ to the other vertices we obtain a CID function with weight $n-3$, which is again impossible. Thus, the condition \textbf{3.1} is true.

\textbf{Subcase 2.2.2.} Consider $\alpha(G'')=2$. Suppose then, that $S$ and $L$ are the sets of support vertices and leaves, respectively. If there exist two non-adjacent vertices $x$ and $y$ in $V(G)\setminus (S\cup L)$, then $L\cup\{x,y\}$ is an independent set with $|L\cup\{x,y\}|>\alpha(G)$, which is impossible. Therefore, the subgraph induced by $V(G)\setminus (S\cup L)$ is a complete graph. This shows that $G''=H_{m}=K_{r}-\{u_{1}w_{1},\dots,u_{m}w_{m}\}$, in which at least one of the endpoints of $u_{k}w_{k}$ is a support vertex, for each $1\leq k\leq m$.

Similarly to the arguments given in Subcase $2.2.1$, at most one support vertex has two adjacent leaves and the other ones have precisely one adjacent leaf. Now let $u$ be a support vertex adjacent to two leaves. Suppose that there are two vertices $x$ and $y$ in $V(G'')\setminus\{u\}$ such that $xy\notin E(G'')$. Then, by assigning $2$ to $u$, $0$ to its leaves and to $x$ and $y$, and $1$ to the other vertices, we get a CID function with weight $n-3$, a contradiction. Therefore, the subgraph induced by $V(G'')\setminus\{u\}$ is a complete graph. This shows that $G-(I\setminus \{v_1\})=H_m=K_r-\{uw_1,uw_{2},\cdots,uw_{m}\}$, and so, the condition \textbf{3.2} is valid.

Assume now that all support vertices are adjacent to only one leaf. Therefore, we meet the condition \textbf{3.3}.\vspace{1mm}

\textbf{Subcase 2.3.} Consider now that all vertices in $I$ except $v_{1}$ and $v_{2}$ are leaves. We consider $G'''=G-(I\setminus\{v_1,v_2\})$. Note that if there exists a support vertex $x$ having more than one adjacent leaf, then by assigning $2$ to $x$, $0$ to its leaves and to $v_{1}$ and $v_{2}$, and $1$ to the other vertices, we create a CID function with weight $n-3$. This is a contradiction. So, all support vertices are adjacent to only one leaf. There are now the following possibilities depending on the removed edges from $G'''$ with $\alpha(G''')=2$.

($e$) $G'''=K_r-\{v_1v_{2},v_1w_{1},\dots,v_1w_{m-1}\}$ for $m\geq1$, or

($f$) $G'''=K_r-\{v_{1}v_{2},u_{1}w_{1},\dots,u_{m-1}w_{m-1}\}$ in which $v_{1}v_{2}$ and $u_{k}w_{k}$ have no endpoints in common for some $1\leq k\leq m-1$.

All in all, we have proved that $G\in \mathcal{G }_2$ when $\gamma_{cI}(G)= n-2$ and $\alpha(G)\ge 3$. This completes the proof.
\end{proof}


\section{Conclusions and problems}

$\bullet$ For any graph $G$, we have $\beta(G)\leq \gamma_{cI}(G)\leq2\beta(G)$ as already noted in Remark \ref{vertex-cover}. The family of all graphs $G$ for which $\gamma_{cI}(G)=2\beta(G)$ was characterized in this paper. But the problem of characterizing all graphs for which the equality holds in the lower bound is still open.\vspace{1.5mm}

\textbf{Remark.} A {\em $2$-outer-independent dominating set} ($2$OID set) in a graph $G$ is a subset $S\subseteq V(G)$ such that every vertex in $V(G)\setminus S$ has at least two neighbors in $S$, and the set $V(G)\setminus S$ is independent. The {\em $2$-outer-independent domination number} ($2$OID number) $\gamma_{2}^{oi}(G)$, of a graph $G$, is the smallest possible cardinality among all $2$OID sets in $G$. This concept was introduced in \cite{jk} and investigated in \cite{mpsy}. Clearly, $\gamma_{cI}(G)\leq \gamma_{2}^{oi}(G)$. Now, let $\gamma_{cI}(G)=\beta(G)$ and let $f$ be a $\gamma_{cI}(G)$-function. Since $\gamma_{cI}(G)=|V_{2}|+|V_{2}|+|V_{1}|\geq|V_{2}|+\beta(G)$, we have $V_{2}=\emptyset$. This implies that $\gamma_{2}^{oi}(G)\leq \gamma_{cI}(G)$. Thus, $\gamma_{cI}(G)=\gamma_{2}^{oi}(G)$ when $\gamma_{cI}(G)=\beta(G)$. Therefore, a problem equivalent to the above-mentioned one is to characterize the graphs $G$ for which $\gamma_{2}^{oi}(G)=\beta(G)$.\vspace{2mm}

$\bullet$ Characterize all $K_{1,r}$-free graphs ($r\geq3$) for which the lower bound given in Theorem \ref{free} holds with equality. For the case $r=3$ (claw-free graphs), this problem was solved by Theorem \ref{Claw}.\vspace{2mm}

$\bullet$ It was mentioned that $\gamma_{cI}(G)=\beta(G)$ when $\delta(G)\geq2$. On the other hand, the problem of computing $\beta(G)$ for a bipartite graph $G$ can be solved in polynomial-time (see \cite{gj}). So, $\gamma_{cI}(G)$ can be computed in polynomial-time for this family of graphs, as well. Now the question is: does the CID problem for bipartite graphs with minimum degree one belong to NP? Also, is it possible to construct a polynomial algorithm for computing the value of $\gamma_{cI}(T)$ for any tree $T$?\vspace{2mm}

$\bullet$ An {\em outer independent Roman dominating function} (OIRD function) of a graph $G$ is a Roman dominating function for which $V_{0}$ is independent. The {\em outer independent Roman domination number} (OIRD number) $\gamma_{oiR}(G)$ is the minimum weight among all OIRD functions of $G$. This parameter was introduced in \cite{acs1}. Clearly, $\gamma_{cI}(G)\leq \gamma_{oiR}(G)$ for any graph $G$. A natural problem is to characterize the graphs (or at least the trees) for which these two parameters are equal.


\end{document}